\newtheorem{prop}{Proposition}[section]
\newtheorem{lemma}[prop]{Lemma}
\newtheorem{thm}[prop]{Theorem}
\newtheorem{rem}[prop]{Remark}
\newtheorem{theorem}{Theorem}
\newtheorem{corollary}[theorem]{Corollary}
\newtheorem{cor}[prop]{Corollary}
\theoremstyle{definition}
\newtheorem{example}[prop]{Example}
\newtheorem{defn}[prop]{Definition}
\newcommand{\CC}{\mathbb{C}}
\renewcommand{\P}{\mathbb{P}}
\renewcommand{\L}{\mathcal{L}}
\newcommand{\I}{\mathcal{I}}
\renewcommand{\O}{\mathcal{O}}
\newcommand{\Z}{\mathbb{Z}}
\newcommand{\Q}{\mathbb{Q}}
\newcommand{\F}{\mathcal{F}}
\newcommand{\cP}{\mathcal{P}}
\newcommand{\G}{\mathcal{G}}
\newcommand{\M}{\mathcal{M}}
\newcommand{\hilb}{\operatorname{Hilb}}
\newcommand{\Ex}{\operatorname{Ext}}
\newcommand{\Hom}{\operatorname{Hom}}
\newcommand{\ch}{\operatorname{ch}}
\newcommand{\ext}{\operatorname{Ext}}
\newcommand{\DT}{\overline{DT}}
\newcommand{\T}{{\bf{T}}}
\newcommand{\s}{\includegraphics[width=.13in]{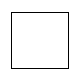}}
\newcommand{\rs}{\includegraphics[width=.21in]{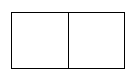}}
\newcommand{\us}{\includegraphics[width=.13in]{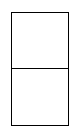}}
\newcommand{\rus}{\includegraphics[width=.22in]{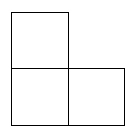}}
\newcommand{\uss}{\includegraphics[width=.13in]{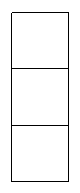}}
\newcommand{\rss}{\includegraphics[width=.31in]{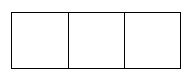}}
\newcommand{\cc}{\mathsf{c}}
\title[Invariants of 2-dimensional sheaves on local $\P^2$]{Generalized Donaldson-Thomas Invariants of 2-Dimensional sheaves on local $\P^2$}
\author[Amin Gholampour and Artan Sheshmani]{Amin Gholampour and Artan Sheshmani}
\date{\today}                                   
\begin{document}
\maketitle
\begin{abstract}
Let $X$ be the total space of the canonical bundle of $\P^2$. We study the generalized Donaldson-Thomas invariants defined in \cite{a30} of the moduli spaces of the 2-dimensional Gieseker semistable sheaves on $X$ with first Chern class equal to $k$ times the class of the zero section of $X$. When $k=1,2,$ or $3$, and semistability implies stability, we express the invariants in terms of known modular forms. We prove a combinatorial formula for the invariants when $k=2$ in the presence of the strictly semistable sheaves, and verify the BPS integrality conjecture of \cite{a30} in some cases. 
\end{abstract}

\setcounter{section}{0}
\section{Introduction}
The study of abelian gauge theory on $\mathbb{R}^4$ led string theorists to discover an interesting symmetry, the electric-magnetic duality, which inverts the coupling constant and extends to an action of $SL_{2}(\mathbb{Z})$. This $SL_{2}(\mathbb{Z})$ symmetry was later studied over the more general 4-manifolds, some with specific topological twists, where it was called $S$-duality , say in the context of $N=2,4$ supersymmetric Yang-Mills theories. The $S$-duality is roughly saying that certain 4-dimensional gauge theories are modular invariant under the action of the $SL_{2}(\Z)$. This modular invariance can be tested by studying the partition function of the theory, roughly speaking, measuring the Euler characteristic of the instanton moduli space of that theory. One of interesting results by string theorists in \cite{a136} was to show that the topological quantum field theories obtained by the so-called (topologically) twisted super Yang-Mills theories over the four manifold  and their associated partition functions are equivalent to Donaldson's theory of four manifolds and the associated partition function of Donaldson's invariants. These interesting consequences of S-duality, later set base for much further developments, such as the correspondence between the supersymmetric black hole entropies and $N=2,d=4$ super Yang-Mills theories \cite{a98}. Recently the study of the conjectural modular properties of the BPS partition functions of the supersymmetric D4-D2-D0 black holes \cite{a78}, \cite{a95} motivated algebraic geometers to construct a mathematical framework for modeling the D4-D2-D0 BPS states and prove the modularity properties of their associated partition functions, using purely algebraic-geometric techniques. The current article is the third in the sequel, after \cite{a123} and \cite{a144}, of the authors' attempt to achieve this goal where here, the focus of the study is to specifically compute the D4-D2-D0 BPS invariants via computing their mathematical counterpart given by the Donaldson-Thomas invariants of torsion sheaves in an ambient Calabi-Yau threefold. 

Let $(X,\O(1))$ be a smooth projective Calabi-Yau threefold. For a pure sheaf $\F$ the Hilbert polynomial is defined to be $P_{\F}(m)=\chi(X,\F(m))$, and the reduced Hilbert polynomial of $\F$ is $$p_\F=\frac{P_\F}{\text{leading coefficient of }P_\F}=m^d+a_\F m^{d-1}+\cdots.$$ 
\begin{itemize}
\item $\F$ is called Gieseker semistable if for any proper subsheaf $\G \subset \F$ we have $p_\G(m) \le p_\F(m)$ for $m\gg 0$. $\F$ is called Gieseker stable if the equality never holds for any proper subsheaf $\G$. 
\item $\F$ is called $\mu$-semistable if for any proper subsheaf $\G \subset \F$ we have $a_\G \le a_\F$. $\F$ is called $\mu$-stable if the equality never holds for any proper subsheaf $\G$. 
\item For a fixed $n\gg 0$, a pair $(\F,s)$, where $s$ is a nonzero section of $\F(n)$, is called stable (\cite[Section 12]{a30}) if
\begin{enumerate}
\item $\F$ is Gieseker semistable, 
\item if $s$ factors through a proper subsheaf $\G(n)$ then $p_\G(m)<p_\F(m)$ for $m\gg 0$. \hfill $\oslash$
\end{enumerate}
\end{itemize}

The stability of pairs has originated from the stability of the coherent systems defined by Le Potier \cite{a12}. The reduced Hilbert polynomial of the pair $(\F,s)$ is defined to be 
\begin{equation}\label{delta-stab}
p_{(\F,s)}=\frac{P_\F+\delta_{0s}\epsilon}{\text{leading coefficient of } P_{\F}}
\end{equation}
 where and $0<\epsilon \ll 1$. Now if $\G \subset \F$, define $s'$ to be the restriction of $s$ if $s$ factors through $\G(n)$, and otherwise define $s'=0$. Now the pair $(\F,s)$ is stable if and only if for any proper subsheaf $\G \subset \F$, $p_{(\G,s')}<p_{(\F,s)}.$

Suppose now that $\M=\M(X;P)$ is a proper moduli space of Gieseker stable sheaves $\F$ (or the moduli space of stable pairs $(\F,s)$) as above with fixed Hilbert polynomial $P_\F=P$. The moduli space $\M$ is usually singular and may have several components with different dimensions. To define (deformation invariant $\Z$-valued) invariants $DT(X;P)$ as \emph{integration} over $\M$ we need to have a \emph{virtual fundamental class} of the moduli space constructed by means of a \emph{perfect obstruction theory} on $\M$. This can be obtained by studying the deformations and obstructions of the stable sheaves or the stable pairs \cite{a2,a14,a20,a10}. 
Moreover, the obstruction theory on $\M$ is symmetric and the corresponding invariants are expressible as a weighted Euler characteristic of the moduli space \cite{a1}. \\

If the moduli space of Gieseker semistable sheaves $\M=\M(X;P)$ contains strictly semistable sheaves, then one cannot define the invariants $DT(X;P)$ by means of the virtual fundamental class. 
Joyce and Song \cite{a30} instead  define the $\Q$-valued invariants for $\M$ called the \emph{generalized DT} invariants $\DT(X;P)$ which are given by the ``\textit{stacky}'' weighted Euler characteristic of the moduli space of semistable sheaves. Joyce-Song stable pairs theory \cite{a30} provides a tool to compute the so-called stacky Euler characteristics, by using the sophisticated motivic wall-crossing techniques developed by Joyce \cite{a58}, as well as Kontsevich and Soibelman in \cite{a42}. In other words, the main idea is to benefit from the, easier to calculate, Joyce Song pair invariants to compute the generalized DT invariants. The latter can be done by obtaining a wall-crossing identity between the the elements of the Hall algebra of the motivic ring of stack functions of the moduli space of stable pairs and the moduli space of semistable sheaves respectively. After taking the stacky Euler characteristics of both sides of this identity, one obtains the wall-crossing identity between the pair invariants and the generalized DT invariants. Note that, $\DT(X;P)$ specializes to $DT(X;P)$ if there are no strictly semistable sheaves and moreover, $\DT(X;P)$ is also deformation invariant.

We study the case where $X$ is the total space of the canonical bundle of $\P^2$ and $\M(X;P)$ is the moduli space of semistable sheaves with Hilbert polynomial $P(m)=rm^2/2+\dots$. Any semistable sheaf $\F$ with Hilbert polynomial $P$ is (at least set theoretically) supported on the zero section of $X$, and $c_1(\F)$ is equal to $r$ times the class of the zero section. We relate $\DT(X;P)$ to the topological invariants of the moduli space of torsion-free semistable sheaves on $\P^2$. Using the wall-crossing formula of Joyce-Song \cite{a30} and the toric methods of \cite{a115, a34} we find a formula for $\DT(X;P)$ when $r=2$ in the presence of strictly semistable sheaves. To express the main result, let $\M(\P^2;P)$ be the moduli space of rank 2 Gieseker semistable sheaves on $\P^2$ with Hilbert polynomial $P$ and let $\M^s(\P^2;P)$ be the open subset of stable sheaves. Denote by $\hilb^{n}(\P^2)$ the Hilbert scheme of $n$ points on $\P^2$. Then we prove

\begin{theorem} \label{thm:P2}
Let $P(m)=m^2+3m+2+b$ where $0\ge b \in \Z$, \begin{enumerate} \item If $b$ is an odd number then $\DT(X;P)=DT(X;P)=\chi(\M(\P^2;P))$. \item If $b$ is an even number then 
\begin{align*}
\DT(X;P)&=\chi(\hilb^{-b/2}(\P^2))/4-\chi(\M^s(\P^2;P))-\frac{1}{2}\cc^{ss}(b), 
\end{align*}
where $\cc^{ss}(b)\in \Z$ is a combinatorial expression (cf. Theorem \ref{thm:ss}) taking into account the contribution of indecomposable strictly semistable sheaves.
\end{enumerate}
\end{theorem}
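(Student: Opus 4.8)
The plan is to reduce the threefold computation to a surface one and then run the Joyce--Song wall-crossing. First I would exploit the $\CC^{*}$-action scaling the fibers of $\pi\colon X\to\P^2$: since the Behrend function is constant on $\CC^{*}$-orbits, $\DT(X;P)$ equals a (stacky, Behrend-weighted) Euler characteristic of the $\CC^{*}$-fixed locus, and the spectral/Higgs correspondence identifies $\CC^{*}$-fixed semistable sheaves on $X$ with semistable sheaves on $\P^2$ carrying the zero Higgs field. Reading off Chern classes from $P(m)=m^2+3m+2+b$ shows the relevant sheaves on $\P^2$ have rank $2$, $c_1=0$ and $c_2=-b$, so the fixed locus is $\M(\P^2;P)$; invoking the reduction of \cite{a123,a144} then expresses $\DT(X;P)$ through the topology of $\M(\P^2;P)$ (and, for the rank-one constituents, of $\hilb^{-b/2}(\P^2)$).

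The next step is a stability analysis. A rank-$2$, $c_1=0$ sheaf is strictly semistable precisely when it is an extension $0\to I_{Z_1}\to\F\to I_{Z_2}\to 0$ of ideal sheaves with equal reduced Hilbert polynomial, which forces $|Z_1|=|Z_2|=-b/2$. Hence the strictly semistable locus is nonempty if and only if $c_2=-b$ is even, i.e. $b$ is even. When $b$ is odd this locus is empty, semistability coincides with stability, $\M(\P^2;P)$ is a fine moduli space, and $\DT(X;P)=DT(X;P)=\chi(\M(\P^2;P))$, which is part (1).

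For even $b$ I would apply the Joyce--Song wall-crossing identity in the motivic Hall algebra relating the stable-pair invariants to $\DT(X;P)$, and compute the pair side by toric localization as in \cite{a115,a34}, where $\CC^{*}$-fixed pairs are governed by monomial ideals and are rigid. Writing the class as $2\beta$ with $\beta$ the class of $I_Z$, $|Z|=-b/2$, inverting this identity (and using the multiple-cover/$\tfrac1{m^2}$ structure of the generalized invariants) splits $\DT(X;P)$ into three pieces: the genuinely stable sheaves contribute $-\chi(\M^s(\P^2;P))$; the polystable sheaves $I_Z\oplus I_Z$, parametrized by $\hilb^{-b/2}(\P^2)$, contribute the term $\tfrac14\,\chi(\hilb^{-b/2}(\P^2))$, the factor $\tfrac14=\tfrac1{2^2}$ arising from their $GL_2$ automorphisms; and the remaining indecomposable strictly semistable sheaves, the nonsplit extensions of ideal sheaves of equal colength, are collected into $-\tfrac12\,\cc^{ss}(b)$.

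The main obstacle is this last piece. It requires a precise description of the strictly semistable locus, namely the extension spaces $\Ext^1(I_{Z_2},I_{Z_1})$ together with their automorphisms and the degenerate diagonal $Z_1=Z_2$, and a toric fixed-point enumeration yielding the combinatorial quantity $\cc^{ss}(b)$ (cf. Theorem~\ref{thm:ss}). Keeping the signs and the rational weights consistent as one passes through the Hall-algebra wall-crossing, and correctly separating the indecomposable contributions from the polystable ones, is the delicate part, and is where I expect the bulk of the work to lie.
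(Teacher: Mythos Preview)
Your overall architecture---Joyce--Song wall-crossing between pair invariants and $\DT$, followed by a toric computation of the pair side---is exactly the paper's. Two differences are worth flagging.

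First, your reduction to $\P^2$ via the fiberwise $\CC^*$-action and a spectral/Higgs correspondence is more machinery than needed. The paper's Lemma~\ref{lem:support} argues directly: the ideal sheaf of the zero section $S\subset X$ is $L^3$, so for semistable $\F$ the map $\F\otimes L^3\to\F$ must vanish, forcing $\F\cong\F|_S$. Thus $\M(X;P)\cong\M(\P^2;P)$ and $\cP_n(X;P)\cong\cP_n(\P^2;P)$ on the nose, with no localization step and no Higgs data.

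Second, and more substantively, you omit the step that makes the toric Euler-characteristic count legitimate. The paper shows (Proposition~\ref{lem:pair-smooth}) that $\cP_n(\P^2;P)$ is \emph{smooth} of even dimension $P(n)-4b-4$, using $\ext^2_{\P^2}(\F,\F)=0$ from Serre duality and semistability; this is what gives $PI_n=\chi(\cP_n)$ with trivial Behrend weight and allows the $\T$-fixed-locus count to proceed. Your remark that fixed pairs are ``rigid'' does not by itself control the Behrend function on $\cP_n$.

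Finally, your reading of the coefficients is heuristically right but not how they actually arise. In the paper the $\tfrac14\chi(\hilb^{-b/2}(\P^2))$ is the residue after the wall-crossing term $DT(X;P/2)^2\cdot P(n)/8$ (Lemma~\ref{lem:wall-crossing}) is balanced against the decomposable-sheaf contribution to $PI_n$ from Proposition~\ref{lem:pair-invariants}; and the $\tfrac12$ in front of $\cc^{ss}(b)$ comes from the fact that for a strictly semistable $\F$ exactly half of the $\Xi$-compatible section data survive the pair-stability constraint. These constants are outputs of the computation, not inputs.
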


$\DT(X;P)$ is in general a rational number in the presence of semistable sheaves. Joyce and Song in \cite[Section 6.2]{a30} define the corresponding BPS invariants denoted by $\hat{DT}(X;P)$ by the following formula:
$$\DT(X;P)=\sum_{d \ge 1,\; d \mid P(m)}\frac{1}{d^2}\hat{DT}(X;P/d).$$ Joyce and Song conjecture that $\hat{DT}(X;P)$ is an integer. In the case that there are no strictly semistable sheaves with Hilbert polynomial $P$ we have $\hat{DT}(X;P)=DT(X;P)$.

\begin{corollary} \label{cor:ss}Using the notation of Theorem \ref{thm:ss}, we assume that $b$ is an even number then \footnote{By this result to show $\hat{DT}(X;P) \in \Z$ one needs to show that $\cc^{ss}(b) \in 2\Z.$ }
\begin{align*}
\hat{DT}(X;P)=-\chi(\M^s(\P^2;P))-\frac{1}{2}\cc^{ss}(b).
\end{align*}
\end{corollary}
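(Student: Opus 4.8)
The plan is to invert the defining relation for the BPS invariants,
$$\DT(X;P)=\sum_{d\ge 1,\; d\mid P(m)}\frac{1}{d^2}\,\hat{DT}(X;P/d),$$
against the closed formula for $\DT(X;P)$ provided by Theorem \ref{thm:P2}(2), and to read off $\hat{DT}(X;P)$. The first task is to pin down which $d$ actually occur in the sum. Since the leading coefficient of $P(m)=m^2+3m+2+b$ is $1$, in the normalization $P(m)=rm^2/2+\cdots$ we have $r=2$, so any admissible $d$ must divide the support multiplicity $r=2$; thus $d\in\{1,2\}$. To decide when $d=2$ contributes I would use the elementary divisibility observation $P(m)=m^2+3m+2+b\equiv m(m+1)+b\equiv b\pmod 2$ for every $m$, so that $2\mid P(m)$ for all $m$ precisely when $b$ is even. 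Hence, under our hypothesis that $b$ is even, the sum has exactly the two terms $d=1,2$:
$$\DT(X;P)=\hat{DT}(X;P)+\tfrac14\,\hat{DT}(X;P/2).$$

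Next I would evaluate the $d=2$ term. The polynomial $P/2=\tfrac12 m^2+\tfrac32 m+1+\tfrac{b}{2}$ has leading coefficient $\tfrac12$, so it is the Hilbert polynomial of rank-one two-dimensional sheaves, i.e. pushforwards $\iota_*G$ of rank-one torsion-free sheaves $G$ on the zero section $\P^2\hookrightarrow X$. Writing $G=I_Z(a)$ and using $\chi(X,\iota_*G(m))=\chi(\P^2,I_Z(a+m))$, matching the coefficient of $m$ forces $a=0$, and matching the constant term forces $Z$ to have length $-b/2$ (which is $\ge 0$ since $b\le 0$). Therefore the moduli space is $\hilb^{-b/2}(\P^2)$, which in particular contains no strictly semistable members, rank-one torsion-free sheaves being automatically stable. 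By the fact recalled in the excerpt, the absence of strictly semistable sheaves gives $\hat{DT}(X;P/2)=DT(X;P/2)$, and by the rank-one ($k=1$) computation carried out earlier this equals $\chi(\hilb^{-b/2}(\P^2))$; the sign is $+1$ because the Behrend weight is the constant $(-1)^{\dim}$ on the smooth space $\hilb^{-b/2}(\P^2)$, whose complex dimension $-b$ is even.

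Finally I would substitute and cancel. Equating the two expressions for $\DT(X;P)$,
$$\chi(\hilb^{-b/2}(\P^2))/4-\chi(\M^s(\P^2;P))-\tfrac12\cc^{ss}(b)=\hat{DT}(X;P)+\tfrac14\,\chi(\hilb^{-b/2}(\P^2)),$$
the two copies of $\tfrac14\chi(\hilb^{-b/2}(\P^2))$ cancel, leaving
$$\hat{DT}(X;P)=-\chi(\M^s(\P^2;P))-\tfrac12\cc^{ss}(b),$$
as claimed. The divisibility bookkeeping and the cancellation are routine; the one step carrying real content—and the one I would be most careful about—is the $d=2$ identification, namely that the rank-one contribution $\tfrac14\,\hat{DT}(X;P/2)$ reproduces the leading term $\chi(\hilb^{-b/2}(\P^2))/4$ of Theorem \ref{thm:P2} \emph{exactly}, including the correct positive sign. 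It is precisely this agreement that makes the two Hilbert-scheme terms cancel, and any sign or normalization discrepancy in the rank-one invariant would survive into the final formula.
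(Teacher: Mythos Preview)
Your proof is correct and is precisely the argument the paper leaves implicit: the corollary is stated immediately after the BPS defining relation and given only a \qed, so the intended proof is exactly the one you wrote—identify the divisors $d\in\{1,2\}$, recognize the $d=2$ term as $\tfrac14\chi(\hilb^{-b/2}(\P^2))$ via the rank-one case, and cancel against the leading term of Theorem~\ref{thm:P2}(2).
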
 \qed

\section*{Acknowledgment}
We would like to thank Martijn Kool, Jan Manschot and Richard Thomas for many helpful discussions. The second author would like to thank Max Planck Institut f\"{u}r Mathemaik for hospitality.

\section{Proof of Theorem \ref{thm:P2}}\label{sec:P2}

Let $X$ be the total space of $\O(-3)$ over $\P^2$. Then $X$ is a quasiprojective Calabi-Yau threefold, called local $\P^2$. Let $L$ be the pullback of $\O(1)$ from $\P^2$, and let $S\cong \P^2 \subset X$ denote the zero section. We identify the compactly supported cohomology groups of $X$ with the cohomology groups of $\P^2$: $$H^{*}_{cs}(X,\Q)\cong H^{*-2}(\P^2,\Q).$$ Using this identification, let $H\in H^2_{cs}(X,\Q)$, $h\in H^4_{cs}(X,\Q)$, $pt\in H^6_{cs}(X,\Q)$ be respectively the classes of $S$, a line and a point on $S$.  
The Hilbert polynomial (with respect to $L$) of a 2-dimensional compactly supported coherent sheaf $\F$ on $X$ with the compactly supported Chern character $$\ch_{cs}(\F)=\big(0,kH,(3k/2+a) h,(3k/2+3a/2+b)pt\big)\in \oplus_{i=0}^3 H^{2i}_{cs}(X,\Q)$$ is given by $$P(m)= \frac{k}{2}m^2+\frac{3k+2a}{2}m+\frac{3k+2a+2b}{2}.$$ Any such $\F$ is set theoretically supported on $S$. Moreover, we have
\begin{lemma} \label{lem:support}
If $\F$ as above is semistable \footnote{Whenever we mention (semi)\-stability of sheaves, unless otherwise is specified, we always mean Gieseker (semi)\-stability.
}, then $\F$ is scheme theoretically supported on $S$ and hence $$\M(X; P)\cong \M(\P^2; P),$$ the moduli space of rank $k$ semistable sheaves on $\P^2$ with Hilbert polynomial $P$. 
\end{lemma}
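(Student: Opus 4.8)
The plan is to reduce scheme-theoretic support on $S$ to the vanishing of a single morphism built from the equation of $S$, and then to rule out a nonzero such morphism using Gieseker semistability. To set up the geometry, let $\pi\colon X\to\P^2$ be the projection, so that $L=\pi^*\O(1)$ and $S\cong\P^2$ is the zero locus of the tautological section $\tau$ of $\pi^*\O(-3)\cong L^{-3}$. Thus $S$ is an effective divisor with $\O_X(S)\cong L^{-3}$ (equivalently $N_{S/X}\cong\O_{\P^2}(-3)$), and its ideal sheaf sits in a short exact sequence $0\to L^{3}\xrightarrow{\tau}\O_X\to\O_S\to 0$, so that $\I_S\cong L^{3}$. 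Tensoring the inclusion $\I_S\hookrightarrow\O_X$ with $\F$ produces a morphism $\phi\colon\F\otimes L^{3}\to\F$ whose image is the subsheaf $\I_S\cdot\F\subseteq\F$. Consequently $\F$ is scheme-theoretically supported on $S$, i.e. is an $\O_S$-module, if and only if $\phi=0$.

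The heart of the argument is to show $\phi=0$ for semistable $\F$. I would argue by contradiction: suppose $\phi\neq 0$ and set $\G=\operatorname{im}(\phi)\subseteq\F$, a nonzero subsheaf. Since a Gieseker semistable sheaf is pure of dimension $2$, every nonzero subsheaf of $\F$ is again pure of dimension $2$; in particular $\G$ has a reduced Hilbert polynomial $p_\G$ of degree $2$. On one hand, $\G\subseteq\F$ together with the semistability of $\F$ gives $p_\G\le p_\F$ for $m\gg 0$. On the other hand, twisting by a line bundle preserves semistability, so $\F\otimes L^{3}$ is semistable, and $\G$ is a nonzero dimension-$2$ quotient of it; the quotient form of the semistability inequality yields $p_{\F\otimes L^{3}}\le p_\G$ for $m\gg 0$. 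But $P_{\F\otimes L^{3}}(m)=P_\F(m+3)$ with equal leading coefficients, whence $p_{\F\otimes L^{3}}(m)=p_\F(m+3)$, and since $p_\F$ is monic of degree $2$ we have $p_\F(m+3)>p_\F(m)$ for $m\gg 0$. Combining the two inequalities gives $p_\F(m+3)\le p_\G(m)\le p_\F(m)$ for $m\gg 0$, a contradiction. Hence $\phi=0$, i.e. $\I_S\cdot\F=0$, and $\F$ is scheme-theoretically supported on $S$.

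It then remains to transfer this to moduli. Writing $\iota\colon S\hookrightarrow X$ for the closed immersion, every semistable $\F$ equals $\iota_*\F'$ for a unique coherent sheaf $\F'$ on $S\cong\P^2$, of rank $k$ because $c_1(\F)=kH=k[S]$. Since $\iota$ is a closed immersion and $L|_S\cong\O_{\P^2}(1)$, the projection formula gives $P_{\iota_*\F'}=P_{\F'}$, so push-forward preserves Hilbert polynomials; moreover $\iota_*$ identifies the lattice of subsheaves of $\F$ with that of $\F'$. Therefore $\F$ is semistable on $X$ if and only if $\F'$ is semistable on $\P^2$, and this compatibility of stability and of families upgrades the resulting bijection to an isomorphism of moduli spaces $\M(X;P)\cong\M(\P^2;P)$.

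The only genuinely delicate points are the identification $\I_S\cong L^{3}$ and the fact that $L$ is merely nef, not ample, on $X$. The latter is what forces the whole argument to be phrased in terms of reduced Hilbert polynomials rather than $\mu$-slopes or Hilbert polynomials attached to an ample class on $X$; fortunately Gieseker semistability is defined exactly through reduced Hilbert polynomials, which is precisely the datum the comparison $p_\F(m+3)>p_\F(m)$ manipulates, so no ampleness on $X$ is ever needed.
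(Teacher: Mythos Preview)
Your argument is correct and follows essentially the same route as the paper: both identify $\I_S\cong L^3$, consider the multiplication map $\F\otimes L^3\to\F$, and use semistability to force it to vanish, yielding $\F\cong\F|_S$. You have simply spelled out in detail the step the paper compresses into the phrase ``since $\F$ is semistable, the first morphism is necessarily zero,'' and added a careful treatment of the moduli identification.
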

\begin{proof}
The ideal sheaf of $S$ in $X$ is isomorphic to $L^3$, hence we get the exact sequence $$\F\otimes L^3 \to \F \to \F|_S\to 0.$$Since $\F$ is semistable, the first morphism in the sequence above is necessarily zero and hence $\F\cong \F|_S $. 
\end{proof}
Note that for any stable torsion-free sheaf $\F$ on $\P^2$ we have $\Ex^2(\F,\F)=0$ by Serre duality and the negativity of $K_{\P^2}\cong \O(-3)$. Therefore, if $P(m)$ is such that there are no strictly semistable sheaves on $\P^2$ with Hilbert polynomial $P(m)$, then the moduli space $\M=\M(X;P)\cong \M(\P^2;P)$ is un-obstructed and smooth of dimension $$\dim \M= 1- \chi(\F,\F)=-2kb+a^2-k^2+1,$$ where as a sheaf on $\P^2$ $$\ch_0(\F)=k,\quad \ch_1(\F)=a\cdot h, \quad \ch_2(\F)=b\cdot pt.$$  In this case the Behrend's function is determined by $\nu_\M=(-1)^{\dim \M}$ \cite{a1}, and hence $$DT(X;P)=(-1)^{\dim \M}\chi(\M).$$ The generating function of the Euler characteristic of the moduli space of $\mu$-stable torsion-free sheaves on $\P^2$ is known for $k=1,2,3$, by the results of \cite{a110, a119, a89, a135, a136} and they all have modular properties. Here is the summary of these results: 
\begin{enumerate}
\item $k=1$. By tensoring with $\O(-a)$ we can assume that $a=0$. So then $\M(X;P)\cong \hilb^{-b}(\P^2)$, the Hilbert scheme of $-b$ points on $\P^2$, which is smooth of dimension $-2b$. Note that in this case there are no strictly semistable sheaves on $\P^2$ with Hilbert polynomial $m^2+3m/2+b+1$, so by \cite{a89}
\begin{equation} \label{ech}  \sum_b DT(X;m^2/2+3m/2+b+1)q^{-b}=\prod_{n> 0}\frac{1}{(1-q^n)^3}.\end{equation} 
\item $k=2.$ By tensoring with $\O(\lfloor-a/2 \rfloor)$ we can assume that either $a=0$ or $a=1$. If $a=1$ then there are no strictly semistable sheaves with the corresponding Hilbert polynomial $m^2+4m+7/2+b$ (and $\mu$-stability is equivalent to Gieseker stability) and hence $\M(X;m^2+4m+7/2+b)$ is smooth of dimension $-4b-2$ so by \cite[Corollary 4.2]{a110} 
\begin{align*}&\sum_{\tiny \begin{array}{c} \tiny b \in (1/2)\Z \\ b \neq \lfloor b\rfloor \end{array}} DT(X;m^2+4m+7/2+b)q^{1/2-b}\\&=\chi(\M(X;m^2+4m+7/2+b))\\&=\frac{1}{\prod_{n\ge 0}(1-q^n)^6}\sum_{m=1}^\infty\sum_{n=1}^\infty\frac{q^{mn}}{1-q^{m+n-1}}.\end{align*} 
When $a=0$, there are strictly semistable sheaves with Hilbert polynomial $m^2+3m+2+b$ only when $b\in 2\Z$.
If $b \not \in 2\Z$ then $\M$ is smooth of dimension $-4b-3$. So for $b=2b'+1$
$$DT(X;m^2+3m+3+2b')= -\chi(\M(\P^2;m^2+3m+3+2b')).$$  
We will study the case $b\in 2\Z$ in more detail in what follows in this section.
\item $k=3.$ We can assume again that $a=0,1$ or $2$. In the latter two cases $\M$ has no strictly semistable sheaves (and $\mu$-stability is equivalent to Gieseker stability) and there is a modular formula for the generating function $DT(X;P)$ in terms of the generating function of the Euler characteristics of $\M(\P^2;P)$ computed in \cite[Section 4.3]{a110}. 
\end{enumerate}

In the following we compute $\DT(X;P)$ in the presence of semistable sheaves when $k=2$. By the discussion above strictly semistable sheaves only occur if $a=0$ and $b\in 2\Z$. Let 
\begin{equation}\label{Hilb-poly}
P(m)=m^2+3m+2+b \quad \quad b\in 2\Z
\end{equation} 
be the corresponding Hilbert polynomial. We use the moduli space of stable pairs in the sense of \cite{a30}. 

For $n\gg 0$, let $\cP_n=\cP_n(X;P)$ be the moduli space of stable pairs $(\F,s)$ where $\F$ is a semistable sheaf of rank 2 with Hilbert polynomial \eqref{Hilb-poly}, and $s$ is a nonzero section of $\F(n)$. The stability of pairs further requires that if $\G \neq 0$ is a proper subsheaf of $\F$, such that $s$ factors through $\G(n)$, then the Hilbert polynomial of $\G$ is strictly less than the Hilbert polynomial of $\F$. By \cite{a30} $\cP_n$ admits a symmetric perfect obstruction theory. Let $PI_n=PI_n(X;P)$ be the corresponding pair invariants. Note that, even though $X$ is not proper, $\cP_n$ is proper (as all the semistable sheaves are supported on $\P^2\cong S \subset X$) so $PI_n$ is well defined. Alternatively, $PI_n=\chi(\cP_n,\nu_{\cP_n})$.

\begin{lemma} \label{lem:wall-crossing}
$\overline{DT}(X;P)=DT(X;P/2)^2\cdot P(n)/8-PI_n(X;P)/P(n).$
\end{lemma}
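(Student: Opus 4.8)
The plan is to deduce this identity from the Joyce--Song wall-crossing formula \cite[Thm.~5.27]{a30}, which expresses the stable pair invariant $PI_n$ as a sum over all ways of splitting the class $P$ into nonzero semistable summands whose reduced Hilbert polynomials are all equal to $p_P=P$. Writing the Euler pairing on $X$ as $\chi(\F,\G)=\sum_i(-1)^i\dim\Ex^i(\F,\G)$, the formula reads schematically
\begin{equation*}
PI_n=\sum_{\substack{P_1+\cdots+P_l=P\\ p_{P_i}=p_P}}\frac{(-1)^l}{l!}\prod_{i=1}^l\Big(P_i(n)-\sum_{j<i}\chi(P_j,P_i)\Big)\,\DT(X;P_i),
\end{equation*}
where $P_i(n)=\chi(\O(-n),\F_i)=\dim H^0(\F_i(n))$ for $n\gg0$. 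Thus the entire content is to identify which decompositions actually occur and to evaluate their coefficients.

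First I would argue that only two decompositions contribute. Every summand is a nonzero semistable sheaf of positive rank whose reduced Hilbert polynomial equals $p_P=P$; since a rank-$1$ summand then has Hilbert polynomial $\tfrac12 p_P=P/2$ and the ranks must add up to $2$, the only possibilities are the single term $P$ (with $l=1$) and the pair $P/2,\,P/2$ (with $l=2$). No decomposition of length $l\ge3$ can occur, since each piece has rank $\ge1$. This reduces the sum to exactly two terms.

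Next I would compute the two coefficients. For $l=1$ the coefficient is simply $-P(n)\,\DT(X;P)$. For $l=2$ with $P_1=P_2=Q:=P/2$, it is $\tfrac{1}{2}\,Q(n)\big(Q(n)-\chi(Q,Q)\big)\,\DT(X;Q)^2$. The crucial simplification is that $X$ is Calabi--Yau, so Serre duality gives $\Ex^i(\F,\G)\cong\Ex^{3-i}(\G,\F)^\vee$, the Euler pairing is antisymmetric, and hence $\chi(Q,Q)=0$. Using $Q(n)=P(n)/2$ together with $\DT(X;Q)=DT(X;P/2)$ (there are no strictly semistable sheaves in rank $1$), this term becomes $\tfrac12\cdot\tfrac{P(n)}{2}\cdot\tfrac{P(n)}{2}\,DT(X;P/2)^2=\tfrac{P(n)^2}{8}DT(X;P/2)^2$. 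Adding the two contributions yields $PI_n=-P(n)\,\DT(X;P)+\tfrac{P(n)^2}{8}DT(X;P/2)^2$, and dividing by $P(n)$ and rearranging gives precisely the stated formula.

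The main obstacle is purely bookkeeping: extracting the exact shape of the linear-algebra coefficients (the $\overline\chi$-factors and the treatment of the auxiliary section in the class $([\O(-n)],-1)$) from \cite{a30}, and verifying that in those conventions the $l=2$ coefficient is indeed $\tfrac12\,Q(n)\big(Q(n)-\chi(Q,Q)\big)$ rather than some variant, together with the correct $1/l!$ symmetrization. Once the Calabi--Yau antisymmetry $\chi(Q,Q)=0$ is in hand the arithmetic is immediate, so the real work lies in matching these combinatorial coefficients against the reference.
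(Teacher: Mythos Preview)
Your proposal is correct and follows essentially the same approach as the paper: both invoke the Joyce--Song wall-crossing formula, observe that the only splittings of $P$ with equal reduced Hilbert polynomial are $P$ itself and $P/2+P/2$, and use that the Euler form $\chi(P/2,P/2)$ vanishes (you deduce this from Calabi--Yau antisymmetry, the paper simply records $\chi(\I_{Z_1},\I_{Z_2})=0$). Your write-up is more explicit about the coefficients than the paper's one-sentence proof, but the argument is the same.
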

\begin{proof}
This is a direct corollary of the wall-crossing formula \cite[5.17]{a30} by noting two facts. Firstly, the only decomposable semistable sheaves with Hilbert polynomial $P$ are of the form  $\I_{Z_1}\oplus \I_{Z_2}$ where $\I_{Z_1}$ and $\I_{Z_2}$ are the push forwards to $X$ of the ideal sheaves of the 0-dimensional subschemes $Z_1, Z_2 \subset \P^2$ of length $-b/2$. Secondly, the Euler form $\chi(\I_{Z_1},\I_{Z_2})=0$.
\end{proof}

\begin{rem}
Note the polynomial on the right hand side of Lemma \ref{lem:wall-crossing} is a rational number independent of $n\gg 0$.
\hfill $\oslash$

\end{rem}

There is a natural morphism $\cP_n \to \M$ that sends a stable pair $(\F,s)$ to the $S$-equivalence class of $\F$. Note that $\M$ is singular at a point corresponding to a strictly semistable sheaf. However, we have

\begin{prop} \label{lem:pair-smooth} 
$\cP_n(X;P)$ is a smooth scheme of dimension $P(n)-4b-4$.
\end{prop}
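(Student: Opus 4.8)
The plan is to transport the entire problem to the surface $\P^2$ and then run the standard pair deformation theory there, taking advantage of the fact that $\P^2$ is Fano. By the family version of Lemma~\ref{lem:support} the support of each semistable $\F$ is scheme-theoretic on $S\cong\P^2$, and since $\F(n)$ is then also supported on $S$ we have $H^0(X,\F(n))=H^0(\P^2,\F(n))$; consequently $\cP_n(X;P)\cong\cP_n(\P^2;P)$ is the moduli scheme of pairs $s\colon \O_{\P^2}(-n)\to\F$ with $\F$ a rank $2$ Gieseker semistable sheaf on $\P^2$ of Chern character $(2,0,b)$ satisfying the pair-stability condition. I would carry out the deformation theory on $\P^2$. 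Writing $I^\bullet=[\O_{\P^2}(-n)\xrightarrow{s}\F]$ in degrees $-1,0$ with the line bundle $\O_{\P^2}(-n)$ held fixed, the framed deformation theory of the pair is governed by the groups $\Ex^i_{\P^2}(I^\bullet,\F)$: the tangent space is $\Ex^1_{\P^2}(I^\bullet,\F)$, the obstructions lie in $\Ex^2_{\P^2}(I^\bullet,\F)$, and the vanishing $\Ex^0_{\P^2}(I^\bullet,\F)=0$ (pair-stability leaves no infinitesimal automorphisms) shows $\cP_n$ is a scheme. Smoothness will follow once the obstruction group is shown to vanish.

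First I would use the distinguished triangle $R\Hom(\O_{\P^2}(-n),\F)[-1]\to R\Hom(I^\bullet,\F)\to R\Hom(\F,\F)\xrightarrow{+1}$ arising from the stupid filtration of $I^\bullet$, together with $\Ex^i_{\P^2}(\O_{\P^2}(-n),\F)=H^i(\P^2,\F(n))$. Its long exact sequence in degree $2$ reads $H^1(\F(n))\to\Ex^2_{\P^2}(I^\bullet,\F)\to\Ex^2_{\P^2}(\F,\F)\to H^2(\F(n))$. For $n\gg0$ Serre vanishing gives $H^1(\F(n))=H^2(\F(n))=0$, so it is enough to prove $\Ex^2_{\P^2}(\F,\F)=0$. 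By Serre duality on $\P^2$ this group is dual to $\Hom_{\P^2}(\F,\F\otimes K_{\P^2})=\Hom_{\P^2}(\F,\F(-3))$, and the latter vanishes: $\F$ is $\mu$-semistable of slope $0$, so any nonzero homomorphism would produce a quotient of $\F$ of slope $\ge 0$ embedded in $\F(-3)$ of slope $\le -3$, which is absurd. This argument is insensitive to whether $\F$ is stable or strictly semistable, so $\Ex^2_{\P^2}(I^\bullet,\F)=0$ everywhere on $\cP_n$ and the moduli scheme is smooth.

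For the dimension I would compute the Euler characteristic of $R\Hom(I^\bullet,\F)$. Since $\Ex^0=\Ex^2=0$ and $\Ex^{\ge3}_{\P^2}(I^\bullet,\F)=0$ (every contributing term vanishes on the surface), we get $\dim\cP_n=\dim\Ex^1_{\P^2}(I^\bullet,\F)=-\chi(I^\bullet,\F)=\chi(\F(n))-\chi_{\P^2}(\F,\F)$. Here $\chi(\F(n))=P(n)$ by definition of the Hilbert polynomial, and $\chi_{\P^2}(\F,\F)=2kb-a^2+k^2=4b+4$ for $k=2,\ a=0$, which is exactly the numerical quantity behind the formula $\dim\M=1-\chi(\F,\F)$ quoted above; hence $\dim\cP_n=P(n)-4b-4$. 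The conceptual heart of the argument, and the step I expect to require the most care, is isolating the correct deformation theory. The symmetric obstruction theory furnished by \cite{a30} is computed on the Calabi-Yau threefold $X$, where $\Ex^2_X(I^\bullet,\F)\cong\Ex^2_X(\F,\F)\cong\Ex^1_{\P^2}(\F,\F)^{\vee}\neq0$, so smoothness is genuinely invisible at that level; it emerges only after the reduction to $\P^2$, where the negativity of $K_{\P^2}$ forces the surface obstruction group $\Ex^2_{\P^2}(I^\bullet,\F)$ to vanish. Justifying that $\cP_n$ is governed by the surface groups $\Ex^\bullet_{\P^2}(I^\bullet,\F)$ rather than by the two-sided threefold groups $\Ex^\bullet_X(I^\bullet,I^\bullet)$, equivalently that the scheme-theoretic support reduction of Lemma~\ref{lem:support} holds in families, is where the work lies.
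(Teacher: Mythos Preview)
Your proposal is correct and follows essentially the same argument as the paper: reduce to $\P^2$ via Lemma~\ref{lem:support}, then use the long exact sequence for $\Ex^\bullet_{\P^2}(I^\bullet,\F)$ together with Serre vanishing for $H^{>0}(\F(n))$ and the semistability/Fano vanishing $\Ex^2_{\P^2}(\F,\F)=0$ to kill the obstructions and read off the dimension. The only cosmetic differences are your placement of $I^\bullet$ in degrees $-1,0$ (so that your $\Ex^1,\Ex^2$ correspond to the paper's $\Hom,\Ex^1$) and your separate verification that $\Ex^0_{\P^2}(I^\bullet,\F)=0$, whereas the paper reads the tangent dimension directly from the same exact sequence.
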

\begin{proof}
We denote by $I^\bullet$ the 2-term complex $\O(-n) \xrightarrow{s(-n)} \F$ corresponding to a stable pair $(\F,s)$. By the stability of pairs $\F$ has to be a semistable sheaf and hence Lemma \ref{lem:support} implies that $$\cP_n(X;P)\cong \cP_n(\P^2;P),$$ the moduli space of the stable pairs on $\P^2$. The Zariski tangent space and the obstruction space at a $\CC$-point $(F,s) \in \cP_n$ are then identified with $\Hom_{\P^2}(I^\bullet,\F)$ and $\ext^1_{\P^2}(I^\bullet,\F)$ respectively. Consider the following natural exact sequence:
\begin{align*}
&
0\rightarrow \Hom_{\P^2}(\F,\F)\rightarrow \Hom_{\P^2}(\O(-n),\F)\rightarrow \Hom_{\P^2}(I^{\bullet},\F)\rightarrow \ext^{1}_{\P^2}(\F,\F)\\
&
\rightarrow \ext^{1}_{\P^2}(\O(-n),\F)\to \ext^1_{\P^2}(I^\bullet,\F) \to \ext^2_{\P^2}(\F,\F)
\end{align*}
Since $n \gg 0$, we have $\ext^{i}_{\P^2}(\O(-n),\F)\cong H^i(\P^2,\F(n))=0$ for $i>0$. 

We also know that $\ext^2_{\P^2}(\F,\F)=0$ by Serre duality and the semistability of $\F$. So the exact sequence above firstly implies that $\ext^1(I^\bullet,\F)=0$ which means that $\cP_n$ is unobstructed and hence smooth, and secondly $$\dim \Hom(I^{\bullet},F)=\chi_{\P^2}(\F(n))-\chi_{\P^2}(\F,\F)=P(n)-4b-4.$$
 \end{proof}
By Proposition \ref{lem:pair-smooth}, and noting that $P(n) \in 2\Z$ (see \eqref{Hilb-poly}), we have 
\begin{cor} \label{lem:PI}
 $PI_n(X;P)=\chi(\cP_n(X;P))=\chi(\cP_n(\P^2;P))$. 
 \qed 
\end{cor}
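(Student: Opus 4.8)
The plan is to combine the smoothness established in Proposition \ref{lem:pair-smooth} with Behrend's description of the weighted Euler characteristic. Recall that by definition the pair invariant is $PI_n(X;P) = \chi(\cP_n, \nu_{\cP_n})$, the topological Euler characteristic of $\cP_n$ weighted by its Behrend function. The whole point is that the weighting is trivial here.

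First I would invoke the fact, already used earlier in the excerpt for $\M$, that on a smooth scheme the Behrend function is constant equal to the sign of the dimension, i.e. $\nu_{\cP_n} \equiv (-1)^{\dim \cP_n}$ by \cite{a1}. Since Proposition \ref{lem:pair-smooth} gives that $\cP_n(X;P)$ is smooth of dimension $P(n)-4b-4$, this yields
$$PI_n(X;P) = (-1)^{P(n)-4b-4}\,\chi(\cP_n(X;P)).$$
The only thing left to verify is that the sign is $+1$, that is, that $P(n)-4b-4$ is even. For this I would use the two parity observations: by \eqref{Hilb-poly} we have $b \in 2\Z$, so $4b$ and $4$ are both even, and moreover $P(n)\in 2\Z$, since $P(m)=m^2+3m+2+b$ with $m^2+3m=m(m+3)$ always even and $b$ even. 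Hence $P(n)-4b-4$ is even, the sign collapses, and $PI_n(X;P)=\chi(\cP_n(X;P))$.

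Finally, the second equality $\chi(\cP_n(X;P)) = \chi(\cP_n(\P^2;P))$ follows at once from the isomorphism $\cP_n(X;P)\cong \cP_n(\P^2;P)$ already produced in the proof of Proposition \ref{lem:pair-smooth}, as topological Euler characteristic is invariant under isomorphism. I do not expect any genuine obstacle: the entire content of the statement is the parity of $\dim \cP_n$, and the hint $P(n)\in 2\Z$ reduces even that to a one-line check on the coefficients of $P$.
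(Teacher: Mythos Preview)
Your proposal is correct and matches the paper's own justification exactly: the paper states the corollary as an immediate consequence of Proposition~\ref{lem:pair-smooth} together with the observation that $P(n)\in 2\Z$ (from \eqref{Hilb-poly}), which is precisely the parity check you carry out. The second equality via the isomorphism $\cP_n(X;P)\cong \cP_n(\P^2;P)$ is also just as in the paper.
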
 
We will find $\chi(\cP_n(\P^2;P))$ using toric techniques. According to \cite{a110}, a torsion-free $\T=\CC^{*2}$-equivariant sheaf $\F$ on $\P^2$ corresponds to three compatible $\sigma$-families, $F_1, F_2, F_3$ one for each of the standard $\T$-invariant open subsets $U_1,U_2,U_3$ of $\P^2$. For any element $m$ of the character group of $\T$ identified with $\Z^2$, $F_i(m)=\Gamma(U_i,\F)_m$, the eigenspace corresponding to $m$ in the space of sections of $\F$ on  $U_i$.
A triple of $\sigma$-families giving rise to a $\T$-equivariant sheaf is called a $\Delta$-family.

A $\T$-equivariant rank 1 torsion-free sheaf $\I$ on $\P^2$ is determined by three integers $u,v,w$ and three 2d partitions $\pi'_1, \pi'_2, \pi'_3.$ Figure \ref{fig:rank1} from left to right indicates the $\sigma$-families $I_1, I_2, I_3$ over $U_1,U_2,U_3$, respectively.
\\
\begin{figure}[h]

\ifx\JPicScale\undefined\def\JPicScale{1}\fi
\unitlength \JPicScale mm
\setlength{\unitlength}{0.032in}

\begin{picture}(135,30)(0,60)
\linethickness{0.3mm}
\put(10,60){\line(0,1){30}}
\linethickness{0.3mm}
\put(5,65){\line(1,0){35}}
\put(3,60){\makebox(0,0)[cc]{$(u,v)$}}

\put(15,78){\makebox(0,0)[cc]{$\pi'_{1}$}}

\put(70,78){\makebox(0,0)[cc]{$\pi'_{2}$}}
\put(115,78){\makebox(0,0)[cc]{$\pi'_{3}$}}

\put(10,65){\makebox(0,0)[cc]{$\bullet$}}

\linethickness{0.3mm}
\put(60,60){\line(0,1){30}}
\linethickness{0.3mm}
\put(55,65){\line(1,0){35}}
\put(52,60){\makebox(0,0)[cc]{$(v,w)$}}

\put(60,65){\makebox(0,0)[cc]{$\bullet$}}

\linethickness{0.3mm}
\put(105,60){\line(0,1){30}}
\linethickness{0.3mm}
\put(100,65){\line(1,0){35}}
\put(98,60){\makebox(0,0)[cc]{$(w,u)$}}

\put(105,65){\makebox(0,0)[cc]{$\bullet$}}

\linethickness{0.3mm}
\put(10,75){\line(1,0){5}}
\linethickness{0.3mm}
\put(15,70){\line(0,1){5}}
\linethickness{0.3mm}
\put(15,70){\line(1,0){5}}
\linethickness{0.3mm}
\put(20,65){\line(0,1){5}}
\linethickness{0.3mm}
\put(60,85){\line(1,0){5}}
\linethickness{0.3mm}
\put(65,70){\line(0,1){15}}
\linethickness{0.3mm}

\put(65,70){\line(1,0){10}}
\linethickness{0.3mm}
\put(75,65){\line(0,1){5}}
\linethickness{0.3mm}
\put(105,75){\line(1,0){5}}
\linethickness{0.3mm}
\put(110,70){\line(0,1){5}}
\linethickness{0.3mm}
\put(110,70){\line(1,0){15}}
\linethickness{0.3mm}
\put(125,65){\line(0,1){5}}
\end{picture}
\caption{$\Delta$-family for a $\T$-equivariant rank 1 sheaf}
\label{fig:rank1}
\end{figure}
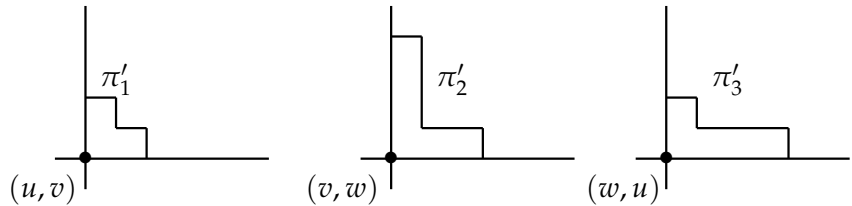

\vspace{4cm}
For any $j=1,2,3$ we have $I_j(m)=0$ if $m$ is below the horizontal axis, on the left of the vertical axis, or inside the partition $\pi'_j$. Otherwise, $I_j(m)=\CC$. 
Moreover, we have (see \cite[Proposition 3.16]{a34}) \begin{align*}\ch_1(\I)&=-u-v-w, \\\ch_2(\I)&=(u+v+w)^2/2-\#\pi'_1-\#\pi'_2-\#\pi'_3. \end{align*}

A $\T$-equivariant rank 2 torsion-free sheaf $\F$ on $\P^2$ (up to tensoring with a degree 0 $\T$-equivariant line bundle) is determined by a $\Delta$-family given by 
\begin{enumerate}[i)]
\item an integer $A$,
\item three nonnegative integers $\Delta_1, \Delta_2, \Delta_3$,
\item a 1-dimensional subspace $p_i \subset \CC^2$ one for each $\Delta_i\neq 0$ (if $\Delta_i=0$ we set $p_i=0$),
\item six 2d partitions $\pi_j^1, \pi_j^2$ for $j=1,2,3$.  
\end{enumerate}

\begin{defn}
We call a $\Delta$-family corresponding to a $\T$-equivariant rank 2 torsion-free sheaf $\F$ on $\P^2$ \emph{non-degenerate} if $p_1,p_2,p_3$ are pairwise distinct and nonzero.
\hfill $\oslash$
\end{defn}
Figure \ref{fig:rank2}  indicates the corresponding $\Delta$-family $(F_1, F_2, F_3)$  in the case $p_1,p_2,p_3$ are pairwise distinct  \footnote{each diagram corresponds to the $\sigma$-family $F_i$ on $U_i$.}. The points indicated by $\bullet$ have the coordinates $(0,0),\; (0,A),\; (A,0)$, respectively. The partitions $\pi_j^1, \pi_j^2$ are placed  respectively at the points (indicated by $\times$) with the coordinates $$(0,\Delta_2), (\Delta_1,0), (0,A+\Delta_3), (\Delta_2,A), (A,\Delta_1), (A+\Delta_3,0).$$

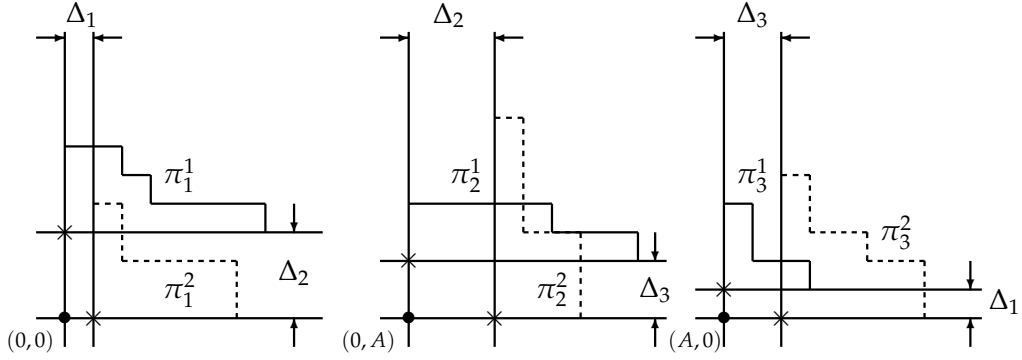
\begin{figure}
\ifx\JPicScale\undefined\def\JPicScale{1}\fi
\unitlength \JPicScale mm
\setlength{\unitlength}{0.03in}

\begin{picture}(165,80)(0,35)
\linethickness{0.3mm}
\put(5,35){\line(0,1){55}}
\linethickness{0.3mm}

\linethickness{0.3mm}
\put(10,89){\line(1,0){5}}
\put(10,89){\vector(-1,0){0.12}}
\linethickness{0.3mm}
\put(0,89){\line(1,0){5}}
\put(5,89){\vector(1,0){0.12}}

\linethickness{0.3mm}
\put(80,89){\line(1,0){5}}
\put(80,89){\vector(-1,0){0.12}}
\linethickness{0.3mm}
\put(60,89){\line(1,0){5}}
\put(65,89){\vector(1,0){0.12}}

\linethickness{0.3mm}
\put(130,89){\line(1,0){5}}
\put(130,89){\vector(-1,0){0.12}}
\linethickness{0.3mm}
\put(115,89){\line(1,0){5}}
\put(120,89){\vector(1,0){0.12}}

\linethickness{0.3mm}
\put(45,55){\line(0,1){5}}
\put(45,55){\vector(0,-1){0.12}}
\linethickness{0.3mm}
\put(45,35){\line(0,1){5}}
\put(45,40){\vector(0,1){0.12}}

\linethickness{0.3mm}
\put(108,50){\line(0,1){5}}
\put(108,50){\vector(0,-1){0.12}}
\linethickness{0.3mm}
\put(108,35){\line(0,1){5}}
\put(108,40){\vector(0,1){0.12}}

\linethickness{0.3mm}
\put(163,45){\line(0,1){5}}
\put(163,45){\vector(0,-1){0.12}}
\linethickness{0.3mm}
\put(163,35){\line(0,1){5}}
\put(163,40){\vector(0,1){0.12}}

\put(0,40){\line(1,0){50}}
\linethickness{0.3mm}
\put(10,35){\line(0,1){55}}
\linethickness{0.3mm}
\put(0,55){\line(1,0){50}}
\linethickness{0.3mm}
\put(65,35){\line(0,1){55}}
\linethickness{0.3mm}
\put(60,40){\line(1,0){50}}
\linethickness{0.3mm}
\put(120,35){\line(0,1){55}}
\linethickness{0.3mm}
\put(115,40){\line(1,0){50}}
\linethickness{0.3mm}
\put(60,50){\line(1,0){50}}
\linethickness{0.3mm}
\put(80,35){\line(0,1){55}}
\linethickness{0.3mm}
\put(130,35){\line(0,1){55}}
\linethickness{0.3mm}
\put(115,45){\line(1,0){50}}
\put(8,93){\makebox(0,0)[cc]{$\Delta_{1}$}}

\put(45,48){\makebox(0,0)[cc]{$\Delta_{2}$}}

\put(25,65){\makebox(0,0)[cc]{$\pi_{1}^1$}}

\put(25,45){\makebox(0,0)[cc]{$\pi_{1}^2$}}

\put(75,65){\makebox(0,0)[cc]{$\pi_{2}^1$}}

\put(90,45){\makebox(0,0)[cc]{$\pi_{2}^2$}}

\put(125,65){\makebox(0,0)[cc]{$\pi_{3}^1$}}

\put(150,55){\makebox(0,0)[cc]{$\pi_{3}^2$}}

\put(72,93){\makebox(0,0)[cc]{$\Delta_{2}$}}

\put(108,45){\makebox(0,0)[cc]{$\Delta_{3}$}}

\put(125,93){\makebox(0,0)[cc]{$\Delta_{3}$}}

\put(169,43){\makebox(0,0)[cc]{$\Delta_{1}$}}

\linethickness{0.3mm}
\put(5,70){\line(1,0){10}}
\linethickness{0.3mm}
\put(15,65){\line(0,1){5}}
\linethickness{0.3mm}
\put(15,65){\line(1,0){5}}
\linethickness{0.3mm}
\put(20,60){\line(0,1){5}}
\linethickness{0.3mm}
\put(20,60){\line(1,0){20}}
\linethickness{0.3mm}
\put(40,55){\line(0,1){5}}
\linethickness{0.3mm}
\multiput(10,60)(2,0){3}{\line(1,0){1}}
\linethickness{0.3mm}
\multiput(15,50)(1.9,0){11}{\line(1,0){0.95}}
\linethickness{0.3mm}
\multiput(35,40)(0,1.82){6}{\line(0,1){0.91}}
\linethickness{0.3mm}
\multiput(15,50)(0,1.82){6}{\line(0,1){0.91}}
\linethickness{0.3mm}
\multiput(95,40)(0,2){8}{\line(0,1){1}}
\linethickness{0.3mm}
\multiput(85,55)(1.82,0){6}{\line(1,0){0.91}}
\linethickness{0.3mm}
\multiput(85,55)(0,1.9){11}{\line(0,1){0.95}}
\linethickness{0.3mm}
\multiput(80,75)(2,0){3}{\line(1,0){1}}
\linethickness{0.3mm}
\put(65,60){\line(1,0){25}}
\linethickness{0.3mm}
\put(90,55){\line(0,1){5}}
\linethickness{0.3mm}
\put(90,55){\line(1,0){15}}
\linethickness{0.3mm}
\put(105,50){\line(0,1){5}}
\linethickness{0.3mm}
\put(120,60){\line(1,0){5}}
\linethickness{0.3mm}
\put(125,50){\line(0,1){10}}
\linethickness{0.3mm}
\put(125,50){\line(1,0){10}}
\linethickness{0.3mm}
\put(135,45){\line(0,1){5}}
\linethickness{0.3mm}
\multiput(155,40)(0,1.82){6}{\line(0,1){0.91}}
\linethickness{0.3mm}
\multiput(145,50)(1.82,0){6}{\line(1,0){0.91}}
\linethickness{0.3mm}
\multiput(145,50)(0,2){3}{\line(0,1){1}}
\linethickness{0.3mm}
\multiput(135,55)(1.82,0){6}{\line(1,0){0.91}}
\linethickness{0.3mm}
\multiput(135,55)(0,1.82){6}{\line(0,1){0.91}}
\linethickness{0.3mm}
\multiput(130,65)(2,0){3}{\line(1,0){1}}


\put(5,40){\makebox(0,0)[cc]{$\bullet$}}
\put(65,40){\makebox(0,0)[cc]{$\bullet$}}
\put(120,40){\makebox(0,0)[cc]{$\bullet$}}

\put(5,55){\makebox(0,0)[cc]{$\times$}}

\put(10,40){\makebox(0,0)[cc]{$\times$}}

\put(65,50){\makebox(0,0)[cc]{$\times $}}

\put(120,45){\makebox(0,0)[cc]{$\times$}}

\put(130,40){\makebox(0,0)[cc]{$\times $}}

\put(80,40){\makebox(0,0)[cc]{$\times$}}

\fontsize{8pt}{12pt}\selectfont{

\put(-1,36){\makebox(0,0)[cc]{$(0,0)$}}

\put(58,36){\makebox(0,0)[cc]{$(0,A)$}}

\put(115,36){\makebox(0,0)[cc]{$(A,0)$}}}

\end{picture}
\caption{A non-degenerate $\Delta$-family corresponding to a rank 2 $\T$-equivariant sheaf}
\label{fig:rank2}
\end{figure}

Figure \ref{fig:rank02}, indicates a typical degenerate case in which $p_1=p_2\neq p_3$. Everything is the same as in the non-degenerate case except that we reposition the partitions $\pi_1^1, \pi_1^2$ to respectively the points with the coordinates $(0,0), (\Delta_1,\Delta_2).$ Similarly if $p_1\neq p_2=p_3$ (respectively  $p_1=p_2\neq p_3$) then we reposition the partitions $\pi_2^1, \pi_2^2$  (respectively $\pi_3^1, \pi_3^2$) to the points with the coordinates $(0,A), (\Delta_2,A+\Delta_3)$ (respectively $(A,0), (A+\Delta_3,\Delta_1)$). Finally, if $p_1=p_2=p_3$ then we bring all the partitions to the point with new coordinates (indicated by $\bullet$).

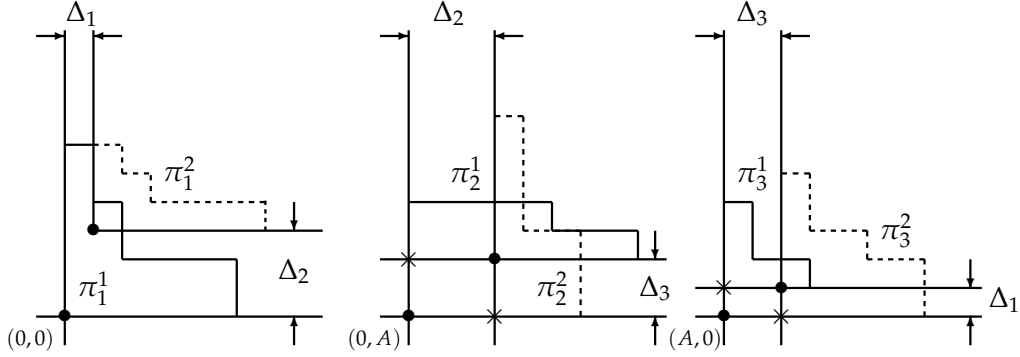
\begin{figure}
\ifx\JPicScale\undefined\def\JPicScale{1}\fi
\unitlength \JPicScale mm
\setlength{\unitlength}{0.03in}

\begin{picture}(165,75)(0,35)
\linethickness{0.3mm}
\put(5,35){\line(0,1){55}}
\linethickness{0.3mm}

\linethickness{0.3mm}
\put(10,89){\line(1,0){5}}
\put(10,89){\vector(-1,0){0.12}}
\linethickness{0.3mm}
\put(0,89){\line(1,0){5}}
\put(5,89){\vector(1,0){0.12}}

\linethickness{0.3mm}
\put(80,89){\line(1,0){5}}
\put(80,89){\vector(-1,0){0.12}}
\linethickness{0.3mm}
\put(60,89){\line(1,0){5}}
\put(65,89){\vector(1,0){0.12}}

\linethickness{0.3mm}
\put(130,89){\line(1,0){5}}
\put(130,89){\vector(-1,0){0.12}}
\linethickness{0.3mm}
\put(115,89){\line(1,0){5}}
\put(120,89){\vector(1,0){0.12}}

\linethickness{0.3mm}
\put(45,55){\line(0,1){5}}
\put(45,55){\vector(0,-1){0.12}}
\linethickness{0.3mm}
\put(45,35){\line(0,1){5}}
\put(45,40){\vector(0,1){0.12}}

\linethickness{0.3mm}
\put(108,50){\line(0,1){5}}
\put(108,50){\vector(0,-1){0.12}}
\linethickness{0.3mm}
\put(108,35){\line(0,1){5}}
\put(108,40){\vector(0,1){0.12}}

\linethickness{0.3mm}
\put(163,45){\line(0,1){5}}
\put(163,45){\vector(0,-1){0.12}}
\linethickness{0.3mm}
\put(163,35){\line(0,1){5}}
\put(163,40){\vector(0,1){0.12}}

\put(0,40){\line(1,0){50}}
\linethickness{0.3mm}
\put(10,55){\line(0,1){35}}
\linethickness{0.3mm}
\put(10,55){\line(1,0){40}}
\linethickness{0.3mm}
\put(65,35){\line(0,1){55}}
\linethickness{0.3mm}
\put(60,40){\line(1,0){50}}
\linethickness{0.3mm}
\put(120,35){\line(0,1){55}}
\linethickness{0.3mm}
\put(115,40){\line(1,0){50}}
\linethickness{0.3mm}
\put(60,50){\line(1,0){50}}
\linethickness{0.3mm}
\put(80,35){\line(0,1){55}}
\linethickness{0.3mm}
\put(130,35){\line(0,1){55}}
\linethickness{0.3mm}
\put(115,45){\line(1,0){50}}
\put(8,93){\makebox(0,0)[cc]{$\Delta_{1}$}}

\put(45,48){\makebox(0,0)[cc]{$\Delta_{2}$}}

\put(25,65){\makebox(0,0)[cc]{$\pi_{1}^2$}}

\put(10,45){\makebox(0,0)[cc]{$\pi_{1}^1$}}

\put(75,65){\makebox(0,0)[cc]{$\pi_{2}^1$}}

\put(90,45){\makebox(0,0)[cc]{$\pi_{2}^2$}}

\put(125,65){\makebox(0,0)[cc]{$\pi_{3}^1$}}

\put(150,55){\makebox(0,0)[cc]{$\pi_{3}^2$}}

\put(72,93){\makebox(0,0)[cc]{$\Delta_{2}$}}

\put(108,45){\makebox(0,0)[cc]{$\Delta_{3}$}}

\put(125,93){\makebox(0,0)[cc]{$\Delta_{3}$}}

\put(169,43){\makebox(0,0)[cc]{$\Delta_{1}$}}

\linethickness{0.3mm}
\put(5,70){\line(1,0){5}}
\multiput(10,70)(1.9,0){3}{\line(1,0){.9}}

\linethickness{0.3mm}
\multiput(15,65)(0,1.9){3}{\line(0,1){.9}}
\linethickness{0.3mm}
\multiput(15,65)(1.9,0){3}{\line(1,0){.9}}
\linethickness{0.3mm}
\multiput(20,60)(0,1.9){3}{\line(0,1){.9}}
\linethickness{0.3mm}
\multiput(20,60)(1.9,0){11}{\line(1,0){.95}}
\linethickness{0.3mm}
\multiput(40,55)(0,1.5){4}{\line(0,1){.75}}
\linethickness{0.3mm}
\put(10,60){\line(1,0){5}}
\linethickness{0.3mm}
\put(15,50){\line(1,0){20}}
\linethickness{0.3mm}
\put(35,40){\line(0,1){10}}
\linethickness{0.3mm}
\put(15,50){\line(0,1){10}}

\linethickness{0.3mm}
\multiput(95,40)(0,2){8}{\line(0,1){1}}
\linethickness{0.3mm}
\multiput(85,55)(1.82,0){6}{\line(1,0){0.91}}
\linethickness{0.3mm}
\multiput(85,55)(0,1.9){11}{\line(0,1){0.95}}
\linethickness{0.3mm}
\multiput(80,75)(2,0){3}{\line(1,0){1}}
\linethickness{0.3mm}
\put(65,60){\line(1,0){25}}
\linethickness{0.3mm}
\put(90,55){\line(0,1){5}}
\linethickness{0.3mm}
\put(90,55){\line(1,0){15}}
\linethickness{0.3mm}
\put(105,50){\line(0,1){5}}
\linethickness{0.3mm}
\put(120,60){\line(1,0){5}}
\linethickness{0.3mm}
\put(125,50){\line(0,1){10}}
\linethickness{0.3mm}
\put(125,50){\line(1,0){10}}
\linethickness{0.3mm}
\put(135,45){\line(0,1){5}}
\linethickness{0.3mm}
\multiput(155,40)(0,1.82){6}{\line(0,1){0.91}}
\linethickness{0.3mm}
\multiput(145,50)(1.82,0){6}{\line(1,0){0.91}}
\linethickness{0.3mm}
\multiput(145,50)(0,2){3}{\line(0,1){1}}
\linethickness{0.3mm}
\multiput(135,55)(1.82,0){6}{\line(1,0){0.91}}
\linethickness{0.3mm}
\multiput(135,55)(0,1.82){6}{\line(0,1){0.91}}
\linethickness{0.3mm}
\multiput(130,65)(2,0){3}{\line(1,0){1}}

\put(10,55){\makebox(0,0)[cc]{$\bullet$}}
\put(80,50){\makebox(0,0)[cc]{$\bullet$}}

\put(5,40){\makebox(0,0)[cc]{$\bullet$}}
\put(65,40){\makebox(0,0)[cc]{$\bullet$}}
\put(120,40){\makebox(0,0)[cc]{$\bullet$}}
\put(130,45){\makebox(0,0)[cc]{$\bullet$}}



\put(65,50){\makebox(0,0)[cc]{$\times$}}

\put(120,45){\makebox(0,0)[cc]{$\times$}}

\put(80,40){\makebox(0,0)[cc]{$\times$}}

\put(130,40){\makebox(0,0)[cc]{$\times$}}

\fontsize{8pt}{12pt}\selectfont{
\put(-1,36){\makebox(0,0)[cc]{$(0,0)$}}

\put(59,36){\makebox(0,0)[cc]{$(0,A)$}}

\put(115,36){\makebox(0,0)[cc]{$(A,0)$}}}

\end{picture}

\caption{A degenerate $\Delta$-family corresponding to a rank 2 $\T$-equivariant sheaf}
\label{fig:rank02}
\end{figure}

For any $j=1,2,3$, we denote by $S_j^1,S_j^2$ the vertical and the horizontal strips made by two vertical and two horizontal lines in each diagram:
$$S_1^1=\{(x,y)| x\ge0,\; 0\le y < \Delta_1\}, \quad S_1^2=\{(x,y)| y\ge0,\; 0\le x < \Delta_2\}$$  
etc. We also denote by $R_1,R_2,R_3$ the areas located above the horizontal strip and to the right of the vertical strip:
$$R_1=\{(x,y)| x\ge \Delta_1,\;  y \ge \Delta_2\}$$ etc. We then have (we use the convention $p_4=p_1$)
\begin{enumerate}[(F1)]
\item $F_j(m)=0$ if either $m\in \pi_j^1\cap \pi_j^2$,  $\;m\in S_j^i\cap \pi_j^{i'}$,  $\;m$ is on the left of the strip $S_j^1$, or $m$ is below the strip $S_j^2$.
\item $F_j(m)=p_j\cap p_{j+1}$,  $m\in S_i^1\cap S_i^2- \pi_j^1$. 
\item $F_j(m)=\CC^2$ if $m$ is in $R_j-\pi_j^1-\pi_j^2$.
\item Suppose that $p_j\neq p_{j+1}$ then $F_j(m)=p_j$  if $m\in S^1_j-\pi_j^1$ or if $m$ belongs to a connected component of  $\pi_j^2-\pi_j^1$ adjacent to a member of $S^1_j-\pi_j^1$; and  $F_j(m)=p_{j+1}$  if $m\in S^2_j-\pi_j^2$ or if $m$ belongs to a connected component of  $\pi_j^1-\pi_j^2$ adjacent to a member of $m\in S^2_j-\pi_j^2$.

\noindent Suppose that $p_j=p_{j+1}$ then $F_j(m)=p_j$  if $m\in S^1_j\cup S^2_j-\pi_j^1$ or if $m$ belongs to a connected component of  $\pi_j^1\cup \pi_j^2-\pi_j^1\cap \pi_j^2$ adjacent to a member of $S^1_j\cup S^2_j-\pi_j^1$.  
\item $F_j(m)=s \subset \CC^2$ where $s$ is an arbitrary 1-dimensional subspace of $\CC^2$ for all $m$ in any connected component of $\pi_j^1\cup\pi_j^2-\pi_j^1\cap \pi_j^2$ other than the ones mentioned in (F4). We denote these connected components by $C_1,\dots,C_k$, and denote by $s_1,\dots,s_k\subset \CC^2$ the corresponding 1-dimensional subspaces.
\end{enumerate}
\begin{rem}
It can be seen that the $\T$-equivariant sheaf described above is indeomposable if and only if the number of nonzero elements of the set $\{p_1,p_2,p_3, s_1,\dots,s_k\}$ is at least three. A $\T$-equivariant sheaf with non-degenerate $\Delta$-family obviously satisfies this condition regardless of the subspaces $s_1,\dots,s_k$. 

\hfill $\oslash$
\end{rem}

\begin{defn}
By a \emph{$\Delta$-family data} $\Xi$ we mean the collection of 
\begin{enumerate} [i)]
\item an integer $A$,
\item nonnegative integers $\Delta_1,\Delta_2, \Delta_3$,
\item six possibly empty 2d partitions $\pi_1^1,\dots,\pi_3^2$,
\item the set $E=\{(i,j)| \; i<j,\; p_i\neq0,\; p_j\neq 0, \; p_i= p_j\}$.
\end{enumerate}
Note that in a $\Delta$-family data we do not specify $p_1, p_2, p_3$, but we only keep track of whether they are distinct or not. Given a $\Delta$-family data $\Xi$ we define $e_{\Xi}$ to be the number of nonzero elements of the set $\{p_1,p_2,p_3\}$.

\hfill $\oslash$
\end{defn}

\newcommand{\sD}{\mathsf{D}}
In terms of $\Delta$-family data (see \cite[Proposition 3.16]{a34}) \begin{align*}\ch_1(\F)&=-2A-\Delta_1-\Delta_2-\Delta_3, \\ \ch_2(\F)&=A^2/2+(A+\sum_{i}\Delta_i)^2/2-\sum_{i,j} \#\pi^i_j-\sum_{i<j}\Delta_i\Delta_j(1-\dim p_i\cap p_j).\end{align*} 
As a result, the $\Delta$-family of the $\T$-equivariant sheaf $\F$ determines the Hilbert polynomial of $\F$.  
\begin{defn}For a Hilbert polynomial $P$ of a rank 2 torsion-free sheaf $\F$ on $\P^2$, we define $\mathsf{D}(P)$ to be the set of all $\Delta$-family data giving rise to $P$.
\hfill $\oslash$
\end{defn}
Given a non-degenerate $\F$ as above, we define the rank 1 torsion-free $\T$-equivariant sheaves $\L_{p_i}$ for any nonzero $p_i$, to be the maximal subsheaves of $\F$ respectively with 
\begin{align*}
&
u=0,v=\Delta_2, w=A+\Delta_3,\;\; \text{generated by }\;\; p_1 \notag\\
&
u=\Delta_1,v=0, w=A+\Delta_3,\;\; \text{generated by }\;\; p_2\notag\\
&
u=\Delta_1,v=\Delta_2, w=A,\;\; \text{generated by }\;\; p_3 
\end{align*}
Similarly, if $\F$ is given by a degenerate $\sigma$-family we can define $\L_{p_i}$ to be the maximal rank 1 subsheaf generated by the $p_i$.

We are only interested in the case where the Hilbert polynomial of $\F$ is $P(m)=m^2+3m+2+b$, so we must have 
$$\Delta_1+\Delta_2+\Delta_3=-2A,$$ and $$b=A^2-\sum_{i,j} \#\pi^i_j-\sum_{i<j}\Delta_i\Delta_j(1-\dim p_i\cap p_j).$$ Then one can see that $\F$ is Gieseker (semi)stable if the Hilbert polynomial of $\L_{p_i}$ is less than (less than or equal to) $P/2$ for $i=1,2,3$ (see \cite[Proposition 3.19]{a34}). Similarly $\F$ is $\mu$-(semi)stable if the linear term of the Hilbert polynomial of $\L_{p_i}$ is less than (less than or equal to) to the linear term of $P/2$ for $i=1,2,3$.

A closed point of $\cP_n(\P^2,P)^{\T}$ consists of a $\T$-equivariant semistable sheaf $\F$ and a $\T$-invariant morphism $\O(-n)\xrightarrow{f} \F$, such that $f$ does not factor through any $\L_{p_i}$ with the Hilbert polynomial $P/2$. Given an indecomposable $\T$-equivariant semistable sheaf $\F$, such an $f$ is determined by a triple of integers $\ell=(u,v,w)$ and a 1-dimensional subspace $t\subset \CC^2$ such that
\begin{enumerate} [(P1)]
\item $u+v+w=n$.
\item $\ell$ determines 3 lattice points $(u,v)$, $(v,w)$, and $(w,u)$ one in each diagram of the $\Delta$-family. We require that $F_1(u,v)\neq 0$, $F_2(v,w)\neq 0$, $F_3(w,u)\neq 0$.
\item If one of $F_1(u,v)$, $F_2(v,w)$, or $F_3(w,u)$ is a 1-dimensional subspace $r\subset \CC^2$ then $t$ has to be equal to $r$, 
\item If the Hilbert polynomial of $\L_{p_i}$ is $P/2$, then $t$ is not allowed to be equal to $p_i$.
\end{enumerate}
\begin{defn}We define $l(\Xi,\ell)$ to be 0 if (P3) is satisfied, otherwise we set $l(\Xi,\ell)=1$. We sometimes use $l$ instead of $l(\Xi,\ell)$ if $\Xi$ and $\ell$ are clear from the context. Given a $\Delta$-family data $\Xi$ and a triple of integers $\ell$ satisfying (P1)-(P4) we say that $\ell$ is \emph{$\Xi$-compatible}. 
\hfill $\oslash$
\end{defn}

For a fixed $\Xi$ and $\ell$ (compatible with $\Xi$), any closed point of $C(\Xi,\ell)=(\P^1)^{k+e_\Xi+l}$ gives rise to a $\T$-equivariant pair $$\O(-n)\xrightarrow{f} \F$$ where $\F$ is a rank 2 $\T$-equivariant sheaf with $\Delta$-family data $\Xi$ \footnote{Note that 1-dimensional subspaces of $\CC^2$ can be identified with the closed points of $\text{Gr(1,2)}\cong\P^1.$}.

\begin{example} \label{ex:toric}
In this example we consider a $\T$-equivariant stable pair $$\O(-5)\xrightarrow{f} \F$$ with the non-degenerate $\Delta$-family data $\Xi$ given by $$A=-2, \quad (\Delta_1,\Delta_2,\Delta_3)=(2,1,1),\quad (\pi^1_1,\dots,\pi_3^2)=(\s,\;,\;,\;,\;,\;).$$ The Hilbert polynomial of $\F$ is $P(m)=m^2+3m$. The lattice points $$(u,v), (v,w), (w,u)$$ corresponding to the possible $\Xi$-compatible triples $\ell=(u,v,w)$ are denoted by $\circ$ and $\bullet$ in Figure \ref{fig:toric} respectively from left to right. We use $\circ$ when $l(\Xi,\ell)=0$, and $\bullet$ when $l(\Xi,\ell)=1$. Note that in each diagram the number of $\circ$ plus twice the number of $\bullet$ is equal to $P(5)=40$. 

In order to determine the morphism $f$, we in addition need to specify a 1-dimensional subspace $t\subset \CC^2$.
Note that $\F$ given by this data is strictly semistable if the Hilbert polynomial of $\L_{p_1}$ is equal to $P/2$. Therefore, the pair is stable if and only if $t\neq p_1$. This means that if $l(\Xi,\ell)=0$ then there is at most one choice for $t$; but if $l(\Xi,\ell)=1$ then $t$ can be any point in $\P^1 \backslash p_1$. So the space of possible $t$ can be either $\emptyset$, a point, or $\P^1 \backslash p_1$. We assign the weight 0 or 1 to a lattice point $\circ$ depending on respectively the first or second possibility occurs, and we assign the weight 1 to each $\bullet$. With these new weights, one can check that the count of $\circ$ and $\bullet$ in each diagram is P(5)/2=20. 
\end{example}
\begin{figure}[h] \label{fig:toric}
\ifx\JPicScale\undefined\def\JPicScale{1}\fi
\unitlength \JPicScale mm
\begin{picture}(135,45)(10,50)
\linethickness{0.3mm}
\put(10,50){\line(0,1){40}}
\linethickness{0.3mm}
\put(5,55){\line(1,0){40}}
\linethickness{0.3mm}
\put(20,50){\line(0,1){40}}
\linethickness{0.3mm}
\put(5,60){\line(1,0){40}}
\linethickness{0.3mm}
\put(55,50){\line(0,1){40}}
\linethickness{0.3mm}
\put(50,55){\line(1,0){40}}
\linethickness{0.3mm}
\put(60,50){\line(0,1){40}}
\linethickness{0.3mm}
\put(50,60){\line(1,0){40}}
\linethickness{0.3mm}
\put(100,50){\line(0,1){40}}
\linethickness{0.3mm}
\put(95,55){\line(1,0){40}}
\linethickness{0.3mm}
\put(105,50){\line(0,1){40}}
\linethickness{0.3mm}
\put(95,65){\line(1,0){40}}
\linethickness{0.3mm}
\put(5,90){\line(1,0){5}}
\put(10,90){\vector(1,0){0.12}}
\linethickness{0.3mm}
\put(20,90){\line(1,0){5}}
\put(20,90){\vector(-1,0){0.12}}
\linethickness{0.3mm}
\put(50,90){\line(1,0){5}}
\put(55,90){\vector(1,0){0.12}}
\linethickness{0.3mm}
\put(60,90){\line(1,0){5}}
\put(60,90){\vector(-1,0){0.12}}
\linethickness{0.3mm}
\put(95,90){\line(1,0){5}}
\put(100,90){\vector(1,0){0.12}}
\linethickness{0.3mm}
\put(105,90){\line(1,0){5}}
\put(105,90){\vector(-1,0){0.12}}
\linethickness{0.3mm}
\put(45,60){\line(0,1){5}}
\put(45,60){\vector(0,-1){0.12}}
\linethickness{0.3mm}
\put(45,50){\line(0,1){5}}
\put(45,55){\vector(0,1){0.12}}
\linethickness{0.3mm}
\put(90,60){\line(0,1){5}}
\put(90,60){\vector(0,-1){0.12}}
\linethickness{0.3mm}
\put(90,50){\line(0,1){5}}
\put(90,55){\vector(0,1){0.12}}
\linethickness{0.3mm}
\put(135,65){\line(0,1){5}}
\put(135,65){\vector(0,-1){0.12}}
\linethickness{0.3mm}
\put(135,50){\line(0,1){5}}
\put(135,55){\vector(0,1){0.12}}
\put(15,95){\makebox(0,0)[cc]{$\Delta_{1}=2$}}
\put(45,67){\makebox(0,0)[cc]{$\Delta_{2}=1$}}
\put(60,95){\makebox(0,0)[cc]{$\Delta_{2}=1$}}
\put(90,67){\makebox(0,0)[cc]{$\Delta_{3}=1$}}
\put(100,95){\makebox(0,0)[cc]{$\Delta_{3}=1$}}
\put(135,60){\makebox(0,0)[cc]{$\Delta_{1}=2$}}
\linethickness{0.3mm}
\put(10,65){\line(1,0){5}}
\linethickness{0.3mm}
\put(15,60){\line(0,1){5}}
\put(10,65){\makebox(0,0)[cc]{$\circ$}}
\put(10,70){\makebox(0,0)[cc]{$\circ$}}
\put(10,75){\makebox(0,0)[cc]{$\circ$}}
\put(10,80){\makebox(0,0)[cc]{$\circ$}}
\put(10,85){\makebox(0,0)[cc]{$\circ$}}
\put(15,65){\makebox(0,0)[cc]{$\circ$}}
\put(15,70){\makebox(0,0)[cc]{$\circ$}}
\put(15,75){\makebox(0,0)[cc]{$\circ$}}
\put(15,80){\makebox(0,0)[cc]{$\circ$}}
\put(20,60){\makebox(0,0)[cc]{$\bullet$}}
\put(20,65){\makebox(0,0)[cc]{$\bullet$}}
\put(20,70){\makebox(0,0)[cc]{$\bullet$}}
\put(20,75){\makebox(0,0)[cc]{$\bullet$}}
\put(20,80){\makebox(0,0)[cc]{$\circ$}}
\put(25,60){\makebox(0,0)[cc]{$\bullet$}}
\put(25,65){\makebox(0,0)[cc]{$\bullet$}}
\put(25,70){\makebox(0,0)[cc]{$\bullet$}}
\put(25,75){\makebox(0,0)[cc]{$\circ$}}
\put(30,60){\makebox(0,0)[cc]{$\bullet$}}
\put(30,65){\makebox(0,0)[cc]{$\bullet$}}
\put(30,70){\makebox(0,0)[cc]{$\circ$}}
\put(35,60){\makebox(0,0)[cc]{$\bullet$}}
\put(35,65){\makebox(0,0)[cc]{$\circ$}}
\put(40,60){\makebox(0,0)[cc]{$\circ$}}
\put(20,55){\makebox(0,0)[cc]{$\circ$}}
\put(25,55){\makebox(0,0)[cc]{$\circ$}}
\put(30,55){\makebox(0,0)[cc]{$\circ$}}
\put(35,55){\makebox(0,0)[cc]{$\circ$}}
\put(40,55){\makebox(0,0)[cc]{$\circ$}}
\put(15,60){\makebox(0,0)[cc]{$\circ$}}
\put(55,60){\makebox(0,0)[cc]{$\circ$}}
\put(55,65){\makebox(0,0)[cc]{$\circ$}}
\put(55,70){\makebox(0,0)[cc]{$\circ$}}
\put(55,75){\makebox(0,0)[cc]{$\circ$}}
\put(55,80){\makebox(0,0)[cc]{$\circ$}}
\put(60,80){\makebox(0,0)[cc]{$\circ$}}
\put(65,80){\makebox(0,0)[cc]{$\circ$}}
\put(65,75){\makebox(0,0)[cc]{$\circ$}}
\put(70,75){\makebox(0,0)[cc]{$\circ$}}
\put(70,70){\makebox(0,0)[cc]{$\circ$}}
\put(75,70){\makebox(0,0)[cc]{$\circ$}}
\put(75,65){\makebox(0,0)[cc]{$\circ$}}
\put(80,65){\makebox(0,0)[cc]{$\circ$}}
\put(80,60){\makebox(0,0)[cc]{$\circ$}}
\put(85,60){\makebox(0,0)[cc]{$\circ$}}
\put(85,55){\makebox(0,0)[cc]{$\circ$}}
\put(65,55){\makebox(0,0)[cc]{$\circ$}}
\put(70,55){\makebox(0,0)[cc]{$\circ$}}
\put(75,55){\makebox(0,0)[cc]{$\circ$}}
\put(80,55){\makebox(0,0)[cc]{$\circ$}}
\put(60,60){\makebox(0,0)[cc]{$\bullet$}}
\put(60,65){\makebox(0,0)[cc]{$\bullet$}}
\put(60,70){\makebox(0,0)[cc]{$\bullet$}}
\put(60,75){\makebox(0,0)[cc]{$\bullet$}}
\put(65,60){\makebox(0,0)[cc]{$\bullet$}}
\put(65,65){\makebox(0,0)[cc]{$\bullet$}}
\put(65,70){\makebox(0,0)[cc]{$\bullet$}}
\put(70,60){\makebox(0,0)[cc]{$\bullet$}}
\put(70,65){\makebox(0,0)[cc]{$\bullet$}}
\put(75,60){\makebox(0,0)[cc]{$\bullet$}}
\put(100,65){\makebox(0,0)[cc]{$\circ$}}
\put(100,70){\makebox(0,0)[cc]{$\circ$}}
\put(100,75){\makebox(0,0)[cc]{$\circ$}}
\put(100,80){\makebox(0,0)[cc]{$\circ$}}
\put(100,85){\makebox(0,0)[cc]{$\circ$}}
\put(105,55){\makebox(0,0)[cc]{$\circ$}}
\put(105,60){\makebox(0,0)[cc]{$\circ$}}
\put(110,55){\makebox(0,0)[cc]{$\circ$}}
\put(110,60){\makebox(0,0)[cc]{$\circ$}}
\put(115,60){\makebox(0,0)[cc]{$\circ$}}
\put(115,55){\makebox(0,0)[cc]{$\circ$}}
\put(120,60){\makebox(0,0)[cc]{$\circ$}}
\put(120,55){\makebox(0,0)[cc]{$\circ$}}
\put(125,60){\makebox(0,0)[cc]{$\circ$}}
\put(125,55){\makebox(0,0)[cc]{$\circ$}}
\put(125,65){\makebox(0,0)[cc]{$\circ$}}
\put(105,85){\makebox(0,0)[cc]{$\circ$}}
\put(110,80){\makebox(0,0)[cc]{$\circ$}}
\put(115,75){\makebox(0,0)[cc]{$\circ$}}
\put(120,70){\makebox(0,0)[cc]{$\circ$}}
\put(120,65){\makebox(0,0)[cc]{$\bullet$}}
\put(115,65){\makebox(0,0)[cc]{$\bullet$}}
\put(115,70){\makebox(0,0)[cc]{$\bullet$}}
\put(110,70){\makebox(0,0)[cc]{$\bullet$}}
\put(110,65){\makebox(0,0)[cc]{$\bullet$}}
\put(105,65){\makebox(0,0)[cc]{$\bullet$}}
\put(105,70){\makebox(0,0)[cc]{$\bullet$}}
\put(105,75){\makebox(0,0)[cc]{$\bullet$}}

\put(105,80){\makebox(0,0)[cc]{$\bullet$}}
\put(110,75){\makebox(0,0)[cc]{$\bullet$}}
\end{picture}
\caption{Toric description of stable pairs}
\end{figure}
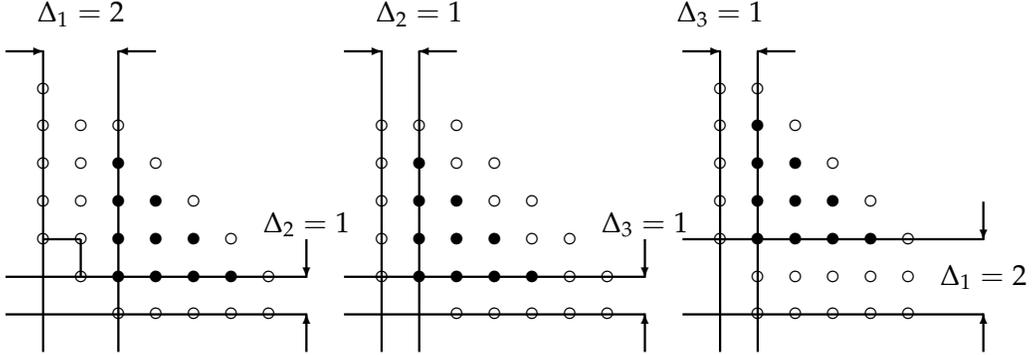
\hfill $\oslash$

In order to get a bijection between the closed points of $\cP_n(\P^2,P)^{\T}$ and the $\T$-equivariant stable pairs with a fixed $\Xi \in \sD(P)$, we need to take a GIT quotient of $C(\Xi,\ell)$ with the action of $\text{SL}(2,\CC)$ induced from the natural action on $\CC^2$. The crucial point is that by a slight modification of the ample line bundle constructed in \cite[Proposition 3.11]{a110}, one can find an ample line bundle $\L_\Xi$ on $C(\Xi,\ell)$ with respect to which the GIT stability matches with the stability of the pairs. In fact in \cite[Proposition 3.11]{a110}, 
an ample line bundle $\boxtimes_{i=1}^{k+e_\Xi}\O(g_i)$ was constructed on $(\P^1)^{k+e_\Xi}$ with respect to which the GIT stability matches with the Gieseker stability of sheaves. 
 If $l=1$, the modified line bundle on $C(\Xi,\ell)$ is taken to be $$(\boxtimes_{i=1}^{k+e_\Xi}\O(g_i))\boxtimes \O(\epsilon),$$  where $\O(\epsilon)$ is a suitable power of $\O(1)$ on the factor of $C(\Xi,\ell)$ corresponding to $l$ (see \eqref{delta-stab}). If $l=0$ then the modification of $\boxtimes_{i=1}^{k+e_\Xi}\O(g_i)$ is made by adjusting the power of $\O(1)$ pulled back from one of the factors of $C(\Xi,\ell)$.  We have proven

\begin{prop}
The Euler characteristic of the moduli space of stable pairs $\cP_n(\P^2,P)$ is given by the following formula
$$\chi(\cP_n(\P^2,P))=\sum_{\Xi,\ell} \chi(C(\Xi,\ell)\sslash_{\L_\Xi}\text{SL}(2,\CC) ),$$ where the sum is over all $\Xi\in \sD(P)$ and the $\Xi$-compatible triples of integers $\ell$. \qed
\end{prop}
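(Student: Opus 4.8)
The plan is to compute $\chi(\cP_n(\P^2,P))$ by torus localization, reducing it to a sum over the $\T$-fixed locus of Euler characteristics of GIT quotients. The starting point is the standard fact that for a complex variety carrying an algebraic action of a torus $\T=(\CC^*)^2$, the topological Euler characteristic is concentrated on the fixed locus, $\chi(\cP_n(\P^2,P))=\chi(\cP_n(\P^2,P)^{\T})$: the complement of the fixed locus is a finite union of $\CC^*$-orbits (up to a locally trivial stratification), each with vanishing Euler characteristic, and $\chi$ is additive. Here the $\T$-action on $\cP_n(\P^2,P)$ is the one induced from the standard $\T$-action on $\P^2$.

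I would then stratify the fixed locus by the discrete combinatorial data. Each closed point of $\cP_n(\P^2,P)^{\T}$ is a $\T$-equivariant stable pair $\O(-n)\xrightarrow{f}\F$, and by the classification recalled above it determines a $\Delta$-family data $\Xi\in\sD(P)$ together with a $\Xi$-compatible triple $\ell$. Conversely, once $(\Xi,\ell)$ is fixed the remaining freedom is precisely the choice of the one-dimensional subspaces $p_1,p_2,p_3$ attached to the nonzero $\Delta_i$ (giving $e_\Xi$ factors), the subspaces $s_1,\dots,s_k$ on the free connected components of the partitions (giving $k$ factors), and, when $l(\Xi,\ell)=1$, the subspace $t\subset\CC^2$ determining the section (one more factor). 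This identifies the space of such data with $C(\Xi,\ell)=(\P^1)^{k+e_\Xi+l}$ and yields a decomposition of $\cP_n(\P^2,P)^{\T}$ indexed by the pairs $(\Xi,\ell)$, whence $\chi(\cP_n(\P^2,P))=\sum_{\Xi,\ell}\chi(\text{stratum indexed by }(\Xi,\ell))$ by additivity.

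It remains to identify each stratum with a GIT quotient of $C(\Xi,\ell)$. Two tuples of subspaces in $C(\Xi,\ell)$ give isomorphic $\T$-equivariant pairs exactly when they differ by the residual change of basis of $\CC^2$, i.e. by the diagonal $\text{SL}(2,\CC)$-action induced on the $\P^1$-factors; so the coarse moduli of the stratum is a quotient of $C(\Xi,\ell)$ by $\text{SL}(2,\CC)$. The crucial and most delicate point is to choose a linearization $\L_\Xi$ under which GIT (semi)stability coincides with stability of the pair. For this I would begin from the ample line bundle $\boxtimes_{i=1}^{k+e_\Xi}\O(g_i)$ of \cite[Proposition 3.11]{a110}, whose GIT stability already matches Gieseker stability of the underlying sheaf, and modify it on the extra factor: when $l=1$ I would tensor by a small power $\O(\epsilon)$ on the $t$-factor --- the GIT shadow of the perturbation $\delta_{0s}\epsilon$ in \eqref{delta-stab}, so that condition (P4) forbidding $t=p_i$ when $\L_{p_i}$ has Hilbert polynomial $P/2$ translates into GIT instability --- while when $l=0$ I would instead adjust the power of $\O(1)$ pulled back from one of the existing factors. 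Checking that with this $\L_\Xi$ the quotient $C(\Xi,\ell)\sslash_{\L_\Xi}\text{SL}(2,\CC)$ parametrizes exactly the stable pairs of type $(\Xi,\ell)$ is the main obstacle; granting it, summing over all $\Xi\in\sD(P)$ and all $\Xi$-compatible $\ell$ gives the stated formula.
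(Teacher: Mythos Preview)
Your proposal is correct and follows essentially the same approach as the paper: torus localization to reduce to the $\T$-fixed locus, stratification by the combinatorial data $(\Xi,\ell)$, and identification of each stratum with the GIT quotient $C(\Xi,\ell)\sslash_{\L_\Xi}\text{SL}(2,\CC)$ via the modified linearization from \cite[Proposition~3.11]{a110}. The only difference is that you make explicit the localization step $\chi(\cP_n)=\chi(\cP_n^{\T})$, which the paper leaves implicit in its phrase ``using toric techniques.''
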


\newcommand{\C}{\mathfrak{C}}

We define $\C(\Xi,\ell)=C(\Xi,\ell)\sslash_{\L_\Xi}\text{SL}(2,\CC)$, and denote by $\C(\Xi,\ell)^{sp}$, $\C(\Xi,\ell)^{ss}$, and $\C(\Xi,\ell)^{st}$ the locally closed subspaces of $\C(\Xi,\ell)$ where the corresponding underlying sheaves $\F$ are respectively decomposable, indecomposable strictly semistable, and stable. Given any two $\Xi$-compatible triple of integers $\ell, \ell'$, it is not hard to verify the following properties:

\begin{enumerate} [i)]
\item $\C(\Xi,\ell)^{sp}$ is empty or an isolated point, 
\item $\chi(\C(\Xi,\ell)^{ss})=\chi(\C(\Xi,\ell')^{ss})$; we denote the common value by $\mathsf{c}(\Xi)^{ss}$.
\item $\chi(\C(\Xi,\ell)^{st})/(l(\Xi,\ell)+1)=\chi(\C(\Xi,\ell')^{st})/(l(\Xi,\ell')+1)$; we denote the common value by $\mathsf{c}(\Xi)^{st}$.
\end{enumerate}

In the following proposition we determine the contributions of each of the above items to the Euler characteristc $\chi(\cP_n(\P^2,P))$ ($P$ as in \eqref{Hilb-poly}).

\begin{prop} \label{lem:pair-invariants}
\begin{enumerate}
\item If $\F\cong \I_{Z_1}\oplus \I_{Z_2}$, where $\I_{Z_1}, \I_{Z_2}$ are the ideal sheaves of the $\T$-invariant 0-dimensional subschemes $Z_1,Z_2 \subset \P^2$ such that the Hilbert polynomials of $\I_{Z_1}, \I_{Z_2}$ are equal to $P/2$ then the contribution of $\F$ to the Euler characteristc $\chi(\cP_n(\P^2,P))$ is 
$$\begin{cases}P(n)(P(n)-2)/8 & \text{if } Z_1=Z_2, \\ P(n)^2/4 & \text{if } Z_1\neq Z_2.\end{cases}$$
\item Given  $\Xi\in \sD(P)$  $$\sum_{\ell \text{ is $\Xi$-compatible}}\chi(\C(\Xi,\ell)^{ss})= P(n)\cdot \mathsf{c}(\Xi)^{ss}/2.$$
\item Given  $\Xi\in \sD(P)$  $$\sum_{\ell \text{ is $\Xi$-compatible}}\chi(\C(\Xi,\ell)^{st})= P(n)\cdot \mathsf{c}(\Xi)^{st}.$$
\end{enumerate}
\end{prop}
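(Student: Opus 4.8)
The plan is to analyze each of the three contributions by a careful bookkeeping of the $\T$-fixed points, organizing the count according to the position of the section-defining lattice point $\ell$ and the discrete data $\Xi$. The unifying idea is that for a \emph{fixed} underlying sheaf $\F$, the possible stable pairs $(\F,s)$ correspond to choices of $\T$-weight $\ell$ (subject to (P1)--(P4)) together with a one-dimensional subspace $t \subset \CC^2$, and that summing a quantity of the form (weight) $\times$ (number of admissible $\ell$) over the $\Delta$-diagram yields a factor of $P(n)/2$ per diagram — exactly the normalization observed numerically in Example \ref{ex:toric}. Thus the governing principle in all three parts is: \emph{the total $\T$-weighted count of admissible section positions across a diagram equals $P(n)/2$}.

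\medskip
For part (1), I would treat the decomposable sheaf $\F \cong \I_{Z_1}\oplus \I_{Z_2}$ with both summands of reduced Hilbert polynomial $P/2$ directly, since here the pair $(\F,s)$ is \emph{never} fixed-point data of the previous type (a decomposable $\F$ fails the indecomposability criterion of the Remark). Instead I count stable pairs $(\F,s)$ with this $S$-equivalence class honestly: a section $s \in H^0(\F(n))$ is a pair $(s_1,s_2)$ with $s_i \in H^0(\I_{Z_i}(n))$, and stability of the pair forbids $s$ from factoring through either summand $\I_{Z_i}(n)$ of reduced Hilbert polynomial $P/2$, i.e. forces both $s_1 \neq 0$ and $s_2 \neq 0$. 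Since $h^0(\I_{Z_i}(n)) = P(n)/2$ for $n \gg 0$, the naive count of such sections is $(P(n)/2)^2$. When $Z_1 \neq Z_2$ the two summands are non-isomorphic, $\Aut(\F) = (\CC^*)^2$ acts freely on the space of admissible sections up to scale, and one obtains $P(n)^2/4$. When $Z_1 = Z_2$ the automorphism group jumps and the pair $(s_1,s_2)$ is only well-defined up to the larger automorphisms; quotienting the $(P(n)/2)^2$ sections by the appropriate $\GL_2$-type action and removing the locus where $s$ becomes proportional to a factor yields $P(n)(P(n)-2)/8$. The bookkeeping here is elementary once the automorphism group and the ``does not factor through a $P/2$-subsheaf'' condition are pinned down.

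\medskip
For parts (2) and (3) I would exploit the symmetry identities (ii) and (iii) already recorded before the statement: $\chi(\C(\Xi,\ell)^{ss})$ is independent of $\ell$ (equal to $\cc(\Xi)^{ss}$), while $\chi(\C(\Xi,\ell)^{st})/(l(\Xi,\ell)+1)$ is independent of $\ell$ (equal to $\cc(\Xi)^{st}$). Hence both sums reduce to counting the admissible $\ell$'s with the correct multiplicity. Concretely, for part (2) the sum becomes $\cc(\Xi)^{ss} \cdot \#\{\ell : \ell \text{ is }\Xi\text{-compatible}\}$, and for part (3) it becomes $\cc(\Xi)^{st} \cdot \sum_\ell (l(\Xi,\ell)+1)$. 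The key computation is therefore to show
\begin{align*}
\#\{\ell \text{ is } \Xi\text{-compatible}\} &= P(n)/2, \\
\sum_{\ell \text{ is }\Xi\text{-compatible}} (l(\Xi,\ell)+1) &= P(n).
\end{align*}
Both follow from the weighting rule of Example \ref{ex:toric}: each admissible lattice point in a diagram contributes $1$ when $l=0$ and $2$ (reflecting the $\P^1\setminus p_i$ of choices for $t$, versus a single forced $t$) when $l=1$, and the total weighted count per diagram is $P(n)/2$. For the \emph{unweighted} count (part (2)), every $\Xi$-compatible $\ell$ is counted once regardless of $l$, and the translation-invariance of the $\Delta$-diagram under shifting $n$ gives the clean value $P(n)/2$; for the \emph{$(l+1)$-weighted} count (part (3)) one recovers $P(n)$.

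\medskip
The main obstacle I anticipate is part (3), and specifically justifying that the weighting $l(\Xi,\ell)+1$ is exactly what converts the per-diagram lattice-point count into $P(n)$ rather than $P(n)/2$. This rests on understanding precisely when the section value $F_j$ at the lattice point $\ell$ is already a prescribed line (forcing $t$, so $l=0$) versus a full $\CC^2$ (leaving a $\P^1$ of choices of $t$, so $l=1$), and on checking that the GIT quotient $\C(\Xi,\ell)^{st} = C(\Xi,\ell)\sslash_{\L_\Xi}\SL(2,\CC)$ indeed acquires the extra $\P^1$-factor (hence a multiplicative $\chi(\P^1)/\chi(\text{pt})$ compensation encoded in the $l+1$) exactly in the $l=1$ case. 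Once the fixed-point picture of Example \ref{ex:toric} is upgraded from the illustrative rank-one-center case to a general $\Xi$, the remaining steps are the combinatorial summations, which are routine.
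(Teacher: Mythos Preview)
Your overall approach coincides with the paper's: for part (1) you carry out explicitly the section count that the paper merely attributes to \cite[Examples 6.1, 6.2]{a30} (your Grassmannian argument for $Z_1=Z_2$ and the $\P^{P(n)/2-1}\times\P^{P(n)/2-1}$ count for $Z_1\neq Z_2$ are exactly right), and for parts (2)--(3) you reduce, via properties (ii)--(iii), to a weighted count of $\Xi$-compatible triples~$\ell$, which is precisely what the paper's one-sentence proof does by pointing to Example~\ref{ex:toric}.

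There is, however, a genuine inconsistency in your two displayed identities. You assert that over the \emph{same} index set of $\Xi$-compatible $\ell$ one has simultaneously $\#\{\ell\}=P(n)/2$ and $\sum_\ell(l(\Xi,\ell)+1)=P(n)$; since $l+1\in\{1,2\}$, these together force $l=1$ for every compatible $\ell$, which is false (the $\circ$-points in Example~\ref{ex:toric} have $l=0$). The resolution, implicit in the example's two separate counts, is that the two parts use \emph{different} weightings. The identity $\sum_\ell(l+1)=P(n)$ holds over all $\ell$ satisfying (P1)--(P2) and simply records $h^0(\F(n))=P(n)$ via the weight-space decomposition; this is what is relevant for the stable locus, where (P4) is vacuous. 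For the strictly semistable locus the relevant count is the ``new-weight'' count of the example: each $\ell$ is weighted by the Euler characteristic of the space of $t$'s satisfying (P4), namely $0$ when $t$ is forced to equal the destabilizing $p_i$, $1$ when $t$ is forced to some other line, and $\chi(\P^1\setminus\{p_i\})=1$ when $l=1$. That weighted count equals $h^0(\F(n))-h^0(\L_{p_i}(n))=P(n)-P(n)/2$, which is where the $P(n)/2$ in part (2) comes from. So your ``unweighted count equals $P(n)/2$'' is not literally a cardinality but this second weighting, and once you separate the two weightings the argument goes through.
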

\begin{proof}
Given $\F$ as in (1), the contribution of $\F$ is equal to the Euler characteristic of the space of $\T$-equivariant sections of $\F(n)$ satisfying the stability condition for the pairs. This is essentially worked out in  \cite[Example 6.1 and 6.2]{a30}. (2) and (3) are proven by fining a weighted number of $\Xi$-compatible triples $\ell$. Each $\ell$ is counted with multiplicity $l(\Xi,\ell)+1$ to account for the Euler characteristics of $\P^1$ and a point (see Example \ref{ex:toric}). 
\end{proof}

\begin{rem} \label{rem:mu}
In the case that $\Xi$ is a non-degenerate $\Delta$-family and $$\Delta_i< \Delta_j +\Delta_k$$ for any pairwise distinct $i,j,k \in \{1,2,3\}$,
it is easy to find $\cc(\Xi)^{ss}$ and $\cc(\Xi)^{st}$. Let $\ell_0$ be a $\Xi$-compatible triple of integers with $l(\Xi,\ell_0)=1$. Since $p_1,p_2,p_3$ are nonzero and pairwise distinct by assumption $C(\Xi,\ell)=(\P^1)^{k+4}$, and we have $$\cc(\Xi)^{st}=2^k \quad \text{and} \quad \cc(\Xi)^{ss}=0.$$ In fact in this case, $\C(\Xi,\ell_0)^{ss}=\emptyset$, and   
$\C(\Xi,\ell_0)^{ss}=(\P^1)^{k+1}$ because by the $\text{SL}(2,\CC)$-action one can fix $p_1,p_2,p_3$ to be respectively $0,1,\infty$ and then $$t,s_1,\dots,s_k\in \P^1$$ can be arbitrary. \hfill $\oslash$
\end{rem}

Using Lemma \ref{lem:wall-crossing} and Proposition \ref{lem:pair-invariants}, we prove the following result evaluating the DT invariants of $X$ corresponding to the rank 2 torsion-free sheaves on $\P^2$:

\begin{thm} \label{thm:ss}
Let $P(m)=m^2+3m+2+b$ where $0\ge b \in \Z$, and let $\M^s(\P^2;P) \subseteq \M(\P^2;P)$ be the open subset of the stable sheaves. \begin{enumerate} \item If $b$ is an odd number then $\DT(X;P)=DT(X;P)=\chi(\M(\P^2;P))$. \item If $b$ is an even number then 
\begin{align*}
\DT(X;P)&=\chi(\hilb^{-b/2}(\P^2))/4-\sum_{\Xi \in \sD(P)}\mathsf{c}(\Xi)^{st}-\frac{1}{2}\sum_{\Xi\in \sD(P)}\mathsf{c}(\Xi)^{ss} \\&=\chi(\hilb^{-b/2}(\P^2))/4-\chi(\M^s(\P^2;P))-\frac{1}{2}\sum_{\Xi \in \sD(P)}\mathsf{c}(\Xi)^{ss}, 
\end{align*}
\end{enumerate}
\end{thm}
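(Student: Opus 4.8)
The plan is to feed the toric computation of the pair invariants into the wall-crossing formula of Lemma~\ref{lem:wall-crossing} and watch the unwanted terms cancel. Part (1) is immediate: when $b$ is odd there are no strictly semistable sheaves with Hilbert polynomial $P$, so $\DT(X;P)=DT(X;P)$, and since $\M\cong\M(\P^2;P)$ is then smooth with Behrend function $(-1)^{\dim\M}$, the invariant equals $(-1)^{\dim\M}\chi(\M(\P^2;P))$; this is exactly the $a=0$, $k=2$ computation already recorded above. So the substance is part (2), which I would prove by evaluating each of the two summands of
$$\DT(X;P)=DT(X;P/2)^2\cdot P(n)/8-PI_n(X;P)/P(n)$$
and checking that the $n$-dependence, together with the quadratic-in-$\chi$ term, drops out, as the remark following Lemma~\ref{lem:wall-crossing} guarantees it must.

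First I would pin down the rank-$1$ factor. Since $P/2=m^2/2+3m/2+1+b/2$ is the Hilbert polynomial of a rank-$1$ sheaf, Lemma~\ref{lem:support} together with the $k=1$ discussion identifies $\M(X;P/2)\cong\hilb^{-b/2}(\P^2)$, which is smooth of even dimension $-b$; hence $DT(X;P/2)=\chi(\hilb^{-b/2}(\P^2))$. Writing $N:=\chi(\hilb^{-b/2}(\P^2))$, the first summand becomes $N^2P(n)/8$.

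Next I would expand $PI_n(X;P)=\chi(\cP_n(\P^2;P))$ using the toric/GIT description. By the preceding Proposition the Euler characteristic is $\sum_{\Xi,\ell}\chi(\C(\Xi,\ell))$, and I would split each $\C(\Xi,\ell)$ into its decomposable, indecomposable-strictly-semistable, and stable strata. Proposition~\ref{lem:pair-invariants}(2),(3) collapse the $\ell$-sums over the last two strata to $\tfrac{1}{2}P(n)\sum_\Xi\cc(\Xi)^{ss}$ and $P(n)\sum_\Xi\cc(\Xi)^{st}$. The delicate point is the decomposable stratum: the $\T$-fixed decomposable sheaves are the $\I_{Z_1}\oplus\I_{Z_2}$ with $Z_1,Z_2\in\hilb^{-b/2}(\P^2)^{\T}$, which I would count as $N$ diagonal classes ($Z_1=Z_2$) and $\binom{N}{2}$ off-diagonal classes ($Z_1\neq Z_2$). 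Feeding the two cases of Proposition~\ref{lem:pair-invariants}(1) into this count and simplifying gives a total decomposable contribution of $N^2P(n)^2/8-NP(n)/4$. Dividing $PI_n$ by $P(n)$ and subtracting from $N^2P(n)/8$, the two $N^2P(n)/8$ terms cancel and I am left with
$$\DT(X;P)=\frac{N}{4}-\sum_{\Xi\in\sD(P)}\cc(\Xi)^{st}-\frac{1}{2}\sum_{\Xi\in\sD(P)}\cc(\Xi)^{ss},$$
which is the first displayed line of the theorem. I expect this bookkeeping of diagonal versus off-diagonal pairs, and the resulting quadratic cancellation, to be the main obstacle, since it is precisely where a miscount would spoil both the $n$-independence and the leading term.

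Finally I would establish the second displayed line by identifying $\sum_{\Xi}\cc(\Xi)^{st}=\chi(\M^s(\P^2;P))$. By toric localization $\chi(\M^s(\P^2;P))=\chi(\M^s(\P^2;P)^{\T})$, and the $\T$-fixed stable sheaves are organized by their $\Delta$-family data $\Xi$. For fixed $\Xi$, forgetting the section $t$ identifies the stable-sheaf locus with $\C(\Xi,\ell)^{st}$ divided by the $\chi(\P^1)=2$ factor (when $l=1$) or the point (when $l=0$) coming from the choice of $t$; by property (iii) above this quotient is precisely $\cc(\Xi)^{st}$, independent of $\ell$. Summing over $\Xi\in\sD(P)$ then yields $\chi(\M^s(\P^2;P))$, completing the proof.
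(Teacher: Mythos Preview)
Your proposal is correct and follows essentially the same route as the paper: both feed Proposition~\ref{lem:pair-invariants} into Lemma~\ref{lem:wall-crossing}, split the pair invariant into decomposable/stable/indecomposable-strictly-semistable contributions, and observe the $n$-dependent and quadratic-in-$N$ terms cancel. Your bookkeeping of diagonal versus off-diagonal $\T$-fixed decomposables agrees exactly with the paper's computation (your $N^2P(n)^2/8 - NP(n)/4$ is the simplified form of their two-term expression), and your identification $\sum_\Xi \cc(\Xi)^{st}=\chi(\M^s(\P^2;P))$ via localization and forgetting the section is equivalent to the paper's observation that over the Gieseker-stable locus the section plays no role in pair stability and stable sheaves are simple.
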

\begin{proof} (1) is already proven (see the discussion at the beginning of this section). It also follows from our toric description, as only $\cc^{st}(\Xi)$ is nonzero for any relevant $\Delta$-family data $\Xi$. We now prove (2).
By Lemma \ref{lem:wall-crossing} $$\DT(X;P)=\chi(\hilb^{-b/2}(\P^2))^2\cdot P(n)/8-PI_n(X;P)/P(n).$$ $PI_n(X;P)$ is the sum of the contributions of three types of  $\T$-equivariant semistable sheaves to the Euler characteristics evaluated in Proposition \ref{lem:pair-invariants}. The contribution of the decomposable sheaves is given by:
\begin{align*}&\chi(\hilb^{-b/2}(\P^2))\cdot P(n)(P(n)-2)/8\\&+\chi(\hilb^{-b}(\P^2))(\chi(\hilb^{-b/2}(\P^2))-1)/2\cdot P(n)^2/4.\end{align*}
By Proposition \ref{lem:pair-invariants}, the contribution of the Gieseker stable sheaves is equal to 
$$\sum_{\Xi\in \sD(P)} P(n)\cdot \mathsf{c}(\Xi)^{st}=\chi(\M^s(\P^2;P))\cdot P(n).$$ The equality follows from the fact that Gieseker stable sheaves are simple, and from the definition of the stability of pairs in which the section does not play a role when the underlying sheaf is Gieseker stable. Finally, the contribution of the indecomposable strictly semistable sheaves is  

$$\sum_{\Xi\in \sD(P)} P(n)\cdot \mathsf{c}(\Xi)^{ss}/2.$$ 
Now the formula in the theorem is obtained by adding all these contributions.

This also finishes the proof of Theorem \ref{thm:P2}.
\end{proof}

\begin{rem}\label{rem:muu}  Let $P(m)=m^2+3m+2+b$ be the Hilbert polynomial corresponding to $a=0$ then by Remark \ref{rem:mu} $\chi(\M^{\mu s}(\P^2;P))$ is the number of 9-tuples $$(\Delta_1,\Delta_2,\Delta_3,\pi_1^1,\pi_1^2,\pi_2^1,\pi_2^2,\pi_3^1,\pi_3^2)$$ of positive integers $\Delta_i$ and 2d partitions $\pi^i_j$ such that
$$\Delta_i<\Delta_j+\Delta_k,\quad b=\sum \Delta_i^2/2-\sum_{i,j}\# \pi_i^j.$$
It is not hard to see that 
\begin{align*}  
\sum_{\tiny \begin{array}{c}\tiny \Delta_1,\Delta_2,\Delta_3 > 0 \\ \Delta_i<\Delta_j+\Delta_k\end{array}}q^{(\sum_i \Delta_i)^2/4+\sum_{i<j}\Delta_i\Delta_j}=\sum_{m=1}^\infty\sum_{n=1}^\infty \frac{q^{mn+m+n}}{1-q^{m+n}}.\end{align*}

From this we get 

\begin{equation*}  
\sum_{b\in \Z}\chi(\M^{\mu s}(\P^2;m^2+3m+2+b))q^{-b}=\frac{1}{\prod_{n> 0}(1-q^n)^6}\sum_{m=1}^\infty\sum_{n=1}^\infty \frac{q^{mn+m+n}}{1-q^{m+n}}.\end{equation*}
This is in agreement with \cite[Corollary 4.2]{a110} which uses a slightly different argument.
In order to find $\chi(\M^s(\P^2;P))$ appearing in Theorem \ref{thm:ss} we need to add to the formula above the contribution of the Gieseker stable sheaves which are not $\mu$-stable. This means that one of the indequlaities in Remark \ref{rem:mu} must turn into an equality $\Delta_i+\Delta_j=\Delta_k$ and we must also allow for the degenerate $\Delta$-families.  \hfill $\oslash$

\end{rem}

In the following examples we compute $\DT(X;P)$, $\hat{DT}(X;p)$ and $\chi(\M^s(X;P))$ in the cases $b=0$, $b=-2$, and $b=-4$. 

\begin{example} 
\textbf{$b=0$}. In this case the only semistable sheaf with Hilbert polynomial $P(m)=m^2+3m+2$ is isomorphic $\O\oplus\O$. Therefore, by Proposition \ref{lem:pair-invariants} part (1) we have $PI_n(X;P)=P(n)(P(n)-2)/8$, and hence by Lemma \ref{lem:wall-crossing} and noting that $DT(X;P/2)=1$ we get $$\DT(X;P)=P(n)/8-(P(n)-2)/8=1/4$$ in agreement with the result of Theorem \ref{thm:ss}. We can easily see that $$\hat{DT}(X;P)=0 \quad \text{and} \quad \chi(\M^s(X;P))=0.$$ \hfill $\oslash$
\end{example}
\begin{example}
 $b=-2$. By Proposition \ref{lem:pair-invariants} we have $$PI_n(X;P)=3P(n)^2/4+3P(n)(P(n)-2)/8+12P(n)/2.$$ The first term is the sum of the contributions of $\I_{Z_1}\oplus \I_{Z_2}$ where $Z_1$, $Z_2$ are two distinct $\T$-fixed points of $\P^2$. The second term is the sum of the contributions $\I_Z\oplus \I_Z$ where $Z$ is a $\T$-fixed point of $\P^2$, and the last term is the contributions indecomposable Gieseker semistable sheaves obtained from Table \ref{tab:b=-2}.
 \begin{table}[h]
 \begin{center}
\begin{tabular}{|c||c|c|c|c||c|c|c|}
\hline
& $A$&$(\Delta_1,\Delta_2,\Delta_3)$&$(\pi_1^1,\dots,\pi_3^2)$&$E$&$\cc^{ss}(\Xi)$& $\cc^{st}(\Xi)$ & $\#$\\
\hline
 1 & $-1$ & $(1,1,0)$ &$(\;,\;,\s,\s,\;,\;)$  & $\emptyset$  & $1$ & $0$ & $3$\\
\hline
2 & $-1$ & $(1,1,0)$ &$(\;,\;,\;,\;,\s,\s)$  & $\emptyset$  & $1$ & $0$ & $3$\\
\hline
3 & $-2$ & $(2,1,1)$ &$(\s,\;,\;,\;,\;,\;)$  & $\emptyset$  & $1$ & $0$ & $3$\\
\hline
4 & $-2$ & $(2,1,1)$ &$(\;,\;,\;,\;,\;,\s)$  & $\emptyset$  & $1$ & $0$ & $3$\\
\hline
\end{tabular}
\end{center}
\caption{$\Delta$-family data for the case $b=-2$.}
\label{tab:b=-2}
\end{table}
Columns 2-5 give the $\Delta$-family data $\Xi$ giving rise to Hilbert polynomial $P$. We only consider the cases $\Delta_1\ge \Delta_2 \ge \Delta_3$, and the very last Column we assign a multiplicity to account for the other $\Delta$-families obtained by reindexing $\Delta_i$'s. See the next example in which we provide more details for a few rows of a similar table.

Now using the fact that $DT(X;P/2)=3$ from (\ref{ech}), by Lemma \ref{lem:wall-crossing} we get 
$$\DT(X;P)=9P(n)/8-3P(n)/4-3(P(n)-2)/8-6=-21/4$$ in agreement with the result of Theorem \ref{thm:ss}. We can easily see that $$\hat{DT}(X;P)=-6 \quad \text{and} \quad \chi(\M^s(X;P))=0.$$
\hfill $\oslash$
\end{example}

\begin{example}
$b=-4$.  We have $\chi(\hilb^2(\P^2))=9$ by (\ref{ech}), so decomposable $\T$-equivariant sheaves contributes 9/4 by Theorem \ref{thm:ss}. We summarize the contributions of indecomposable Gieseker semistable $\T$-equivariant sheaves in Table \ref{tab:b=-4}. Columns 2-5 give the $\Delta$-family data $\Xi$ giving rise to the Hilbert polynomial $P$. We only consider the cases $\Delta_1\ge \Delta_2 \ge \Delta_3$, and the very last Column we assign a multiplicity to account for the other $\Delta$-families obtained by reindexing $\Delta_i$s. From Table \ref{tab:b=-4} we get $\sum_{\Xi} \cc^{ss}(\Xi)=216$ and $\sum_{\Xi} \cc^{st}(\Xi)=54$ 
By the formula in Theorem \ref{thm:ss} $$\DT(X;P)=9/4-54-216/2=-639/4.$$ We can easily see that  
 $$\hat{DT}(X;P)=-162 \quad \text{and} \quad \chi(\M^s(X;P))=54.$$ by Corollary \ref{cor:ss}.

In the following we explain the details for four rows of Table \ref{tab:b=-4}. Figure \ref{fig:rs} gives the $\Delta$-family data $\Xi$ corresponding to Row 1 of Table \ref{tab:b=-4}. We fix a $\Xi$-compatible triple $\ell_0$ with $l(\Xi,\ell_0)=1$. We use the following notation $$p_1=p, p_2=q, s_1=r, s_2=s.$$ This data and a point $t \in \P^1$ completely determine the $\T$-equivariant pair $\O(-n)\to \F$. The stability of the pair translates into the following facts:

\begin{enumerate} [i)]
\item $p\neq q$ and $r,s$ cannot both be equal to $p$ or $q$.
\item $r=p, s=q, q\neq t\neq p$ or $r=q, s=p, q\neq t\neq p$ in which case $\F$ is decomposable.   
\item $r=p, s\neq q, t\neq p$ or $r=q, s\neq p, t\neq q$ or $r\neq p, s=q, t\neq q$ or $r\neq q, s=p, t\neq p$ in which case $\F$ is strictly semistable and indecomposable,
\item if $q\neq r\neq p$ and $q\neq s \neq p$, in which case $\F$ is Gieseker stable.
\end{enumerate}
From these facts we can conclude that $\cc(\Xi)^{st}=0$, and $\cc(\Xi)^{ss}=4$ as claimed in the table.

Figure \ref{fig:rr} gives the $\Delta$-family data $\Xi$ corresponding to Row 7 of Table \ref{tab:b=-4}. We fix a $\Xi$-compatible triple $\ell_0$ with $l(\Xi,\ell_0)=1$. We use the following notation $p_1=p, p_2=q, s_1=r.$ This data and a point $t \in \P^1$ completely determine the $\T$-equivariant pair $\O(-n)\to \F$. The stability of the pair translates into 
\begin{enumerate} [i)]
\item $p\neq q$ and $r\neq p$.
\item $r=q, q\neq t\neq p$ in which case $\F$ is decomposable.   
\item $q\neq r\neq p$ and $t\neq p$, in which case $\F$ is Gieseker semistable and indecomposable.
\end{enumerate}
We can conclude that $\cc(\Xi)^{st}=1$, and $\cc(\Xi)^{ss}=0$ as claimed in the table.

Figure \ref{fig:rrr} gives the $\Delta$-family data $\Xi$ corresponding to Row 34 of Table \ref{tab:b=-4}. We fix a $\Xi$-compatible triple $\ell_0$ with $l(\Xi,\ell_0)=1$. We use the following notation $p_1=p, p_2=q, s_1=r.$ This data and a point $t \in \P^1$ completely determine the $\T$-equivariant pair $\O(-n)\to \F$. The stability of the pair translates into $p\neq q $ and $ q\neq r\neq p$ in which case $\F$ is Gieseker stable.
We can conclude that $\cc(\Xi)^{st}=1$, and $\cc(\Xi)^{ss}=0$ as claimed in the table.

Figure \ref{fig:r} gives the $\Delta$-family data $\Xi$ corresponding to Row 68 of Table \ref{tab:b=-4}. We fix a $\Xi$-compatible triple $\ell_0$ with $l(\Xi,\ell_0)=1$. We use the following notation $$p_1=p, p_2=q, s_1=r.$$ This data and a point $t \in \P^1$ completely determine the $\T$-equivariant pair $\O(-n)\to \F$. The stability of the pair translates into the following facts:

\begin{enumerate} [i)]
\item $p\neq q$ and $r\neq q$.
\item $r=p, q\neq t\neq p$ in which case $\F$ is decomposable.   
\item if $q\neq r\neq p$ and $t\neq q$ in which case $\F$ is Gieseker semistable and indecomposable.
\end{enumerate}

From these facts we can conclude that $\cc(\Xi)^{st}=0$, and $\cc(\Xi)^{ss}=1$ as claimed in the table.
\hfill $\oslash$
\end{example}


\begin{figure}[h] 

\ifx\JPicScale\undefined\def\JPicScale{.9}\fi
\unitlength \JPicScale mm
\begin{picture}(135,40)(0,60)
\linethickness{0.3mm}
\put(5,55){\line(0,1){35}}
\linethickness{0.3mm}
\put(10,55){\line(0,1){35}}
\linethickness{0.3mm}
\put(0,60){\line(1,0){40}}
\linethickness{0.3mm}
\put(0,65){\line(1,0){40}}
\linethickness{0.3mm}
\put(50,55){\line(0,1){35}}
\linethickness{0.3mm}
\put(45,60){\line(1,0){35}}
\linethickness{0.3mm}
\put(55,55){\line(0,1){35}}
\linethickness{0.3mm}
\put(50,65){\line(1,0){10}}
\linethickness{0.3mm}
\put(60,60){\line(0,1){5}}
\linethickness{0.3mm}
\put(90,55){\line(0,1){35}}
\linethickness{0.3mm}
\put(85,60){\line(1,0){40}}
\linethickness{0.3mm}
\put(85,65){\line(1,0){40}}
\linethickness{0.3mm}
\put(90,70){\line(1,0){5}}
\linethickness{0.3mm}
\put(95,60){\line(0,1){10}}
\linethickness{0.3mm}
\put(0,90){\line(1,0){5}}
\put(5,90){\vector(1,0){0.12}}
\linethickness{0.3mm}
\put(10,90){\line(1,0){5}}
\put(10,90){\vector(-1,0){0.12}}
\linethickness{0.3mm}
\put(45,90){\line(1,0){5}}
\put(50,90){\vector(1,0){0.12}}
\linethickness{0.3mm}
\put(55,90){\line(1,0){5}}
\put(55,90){\vector(-1,0){0.12}}
\linethickness{0.3mm}
\put(125,65){\line(0,1){5}}
\put(125,65){\vector(0,-1){0.12}}
\linethickness{0.3mm}
\put(125,55){\line(0,1){5}}
\put(125,60){\vector(0,1){0.12}}
\linethickness{0.3mm}
\put(40,65){\line(0,1){5}}
\put(40,65){\vector(0,-1){0.12}}
\linethickness{0.3mm}
\put(40,55){\line(0,1){5}}
\put(40,60){\vector(0,1){0.12}}
\put(5,95){\makebox(0,0)[cc]{$\Delta_{1}=1$}}

\put(40,75){\makebox(0,0)[cc]{$\Delta_{2}=1$}}

\put(52,95){\makebox(0,0)[cc]{$\Delta_{2}=1$}}

\put(125,75){\makebox(0,0)[cc]{$\Delta_{1}=1$}}

\put(8,85){\makebox(0,0)[cc]{p}}

\put(35,63){\makebox(0,0)[cc]{q}}

\put(8,63){\makebox(0,0)[cc]{0}}

\put(53,85){\makebox(0,0)[cc]{q}}

\put(57,63){\makebox(0,0)[cc]{s}}

\put(52,63){\makebox(0,0)[cc]{0}}

\put(110,45){\makebox(0,0)[cc]{}}

\put(92,68){\makebox(0,0)[cc]{r}}
\put(92,63){\makebox(0,0)[cc]{0}}
\put(120,63){\makebox(0,0)[cc]{p}}
\end{picture}

\caption{Row 1 of Table \ref{tab:b=-4}.}
\label{fig:rs}
\end{figure}
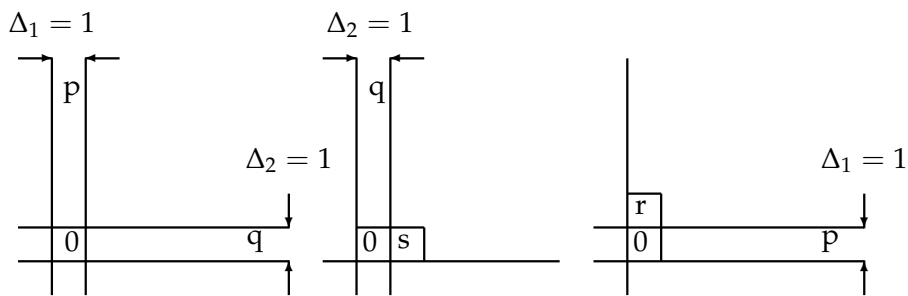

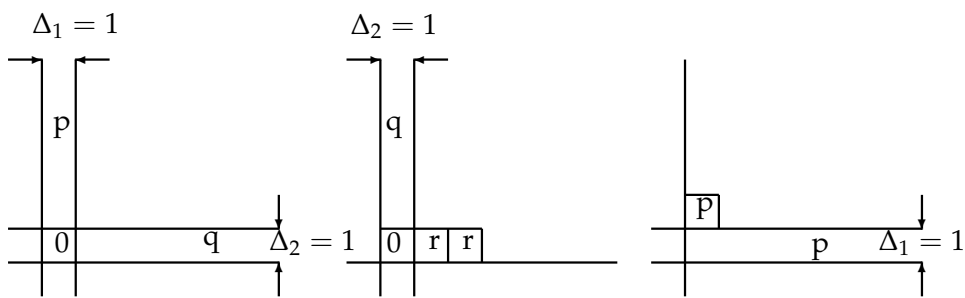
\begin{figure}[h] 
\ifx\JPicScale\undefined\def\JPicScale{.9}\fi
\unitlength \JPicScale mm
\begin{picture}(135,40)(0,55)
\linethickness{0.3mm}
\put(5,50){\line(0,1){35}}
\linethickness{0.3mm}
\put(0,55){\line(1,0){40}}
\linethickness{0.3mm}
\put(0,60){\line(1,0){40}}
\linethickness{0.3mm}
\put(10,50){\line(0,1){35}}
\linethickness{0.3mm}
\put(55,50){\line(0,1){35}}
\linethickness{0.3mm}
\put(50,55){\line(1,0){40}}
\linethickness{0.3mm}
\put(60,50){\line(0,1){35}}
\linethickness{0.3mm}
\put(55,60){\line(1,0){15}}
\linethickness{0.3mm}
\linethickness{0.3mm}
\put(65,55){\line(0,1){5}}
\linethickness{0.3mm}
\put(70,55){\line(0,1){5}}
\linethickness{0.3mm}
\put(100,50){\line(0,1){35}}
\linethickness{0.3mm}
\put(95,55){\line(1,0){40}}
\linethickness{0.3mm}
\put(95,60){\line(1,0){40}}
\put(63,58){\makebox(0,0)[cc]{r}}


\put(68,58){\makebox(0,0)[cc]{r}}

\put(10,90){\makebox(0,0)[cc]{$\Delta_{1}=1$}}

\put(8,75){\makebox(0,0)[cc]{p}}
\put(8,58){\makebox(0,0)[cc]{0}}

\put(45,58){\makebox(0,0)[cc]{$\Delta_{2}=1$}}
\put(30,58){\makebox(0,0)[cc]{q}}

\put(57,90){\makebox(0,0)[cc]{$\Delta_{2}=1$}}
\put(57,75){\makebox(0,0)[cc]{q}}
\put(57,58){\makebox(0,0)[cc]{0}}

\put(135,58){\makebox(0,0)[cc]{$\Delta_{1}=1$}}
\put(120,57){\makebox(0,0)[cc]{p}}

\put(105,60){\line(0,1){5}}
\put(100,65){\line(1,0){5}}
\put(103,63){\makebox(0,0)[cc]{p}}

\linethickness{0.3mm}
\put(0,85){\line(1,0){5}}
\put(5,85){\vector(1,0){0.12}}
\linethickness{0.3mm}
\put(10,85){\line(1,0){5}}
\put(10,85){\vector(-1,0){0.12}}
\linethickness{0.3mm}
\put(40,60){\line(0,1){5}}
\put(40,60){\vector(0,-1){0.12}}
\linethickness{0.3mm}
\put(40,50){\line(0,1){5}}
\put(40,55){\vector(0,1){0.12}}
\linethickness{0.3mm}
\put(50,85){\line(1,0){5}}
\put(55,85){\vector(1,0){0.12}}
\linethickness{0.3mm}
\put(60,85){\line(1,0){5}}
\put(60,85){\vector(-1,0){0.12}}
\linethickness{0.3mm}
\put(135,60){\line(0,1){5}}
\put(135,60){\vector(0,-1){0.12}}
\linethickness{0.3mm}
\put(135,50){\line(0,1){5}}
\put(135,55){\vector(0,1){0.12}}
\end{picture}
\caption{Row 7 of Table \ref{tab:b=-4}.}
\label{fig:rr}
\end{figure}

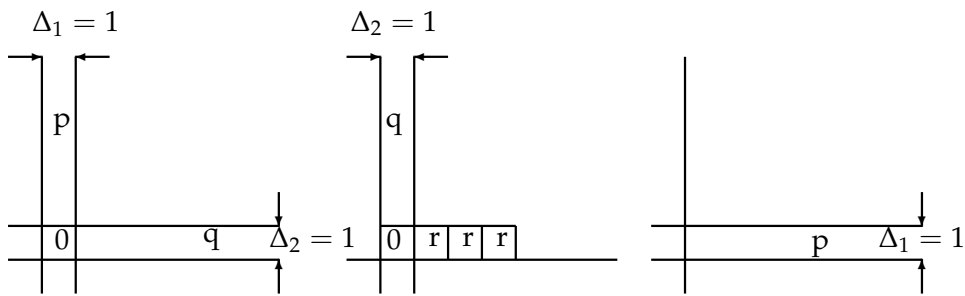
\begin{figure}[h] 
\ifx\JPicScale\undefined\def\JPicScale{.9}\fi
\unitlength \JPicScale mm
\begin{picture}(135,40)(0,55)
\linethickness{0.3mm}
\put(5,50){\line(0,1){35}}
\linethickness{0.3mm}
\put(0,55){\line(1,0){40}}
\linethickness{0.3mm}
\put(0,60){\line(1,0){40}}
\linethickness{0.3mm}
\put(10,50){\line(0,1){35}}
\linethickness{0.3mm}
\put(55,50){\line(0,1){35}}
\linethickness{0.3mm}
\put(50,55){\line(1,0){40}}
\linethickness{0.3mm}
\put(60,50){\line(0,1){35}}
\linethickness{0.3mm}
\put(55,60){\line(1,0){20}}
\linethickness{0.3mm}
\put(75,55){\line(0,1){5}}
\linethickness{0.3mm}
\put(65,55){\line(0,1){5}}
\linethickness{0.3mm}
\put(70,55){\line(0,1){5}}
\linethickness{0.3mm}
\put(100,50){\line(0,1){35}}
\linethickness{0.3mm}
\put(95,55){\line(1,0){40}}
\linethickness{0.3mm}
\put(95,60){\line(1,0){40}}
\put(63,58){\makebox(0,0)[cc]{r}}

\put(73,58){\makebox(0,0)[cc]{r}}

\put(68,58){\makebox(0,0)[cc]{r}}

\put(10,90){\makebox(0,0)[cc]{$\Delta_{1}=1$}}

\put(8,75){\makebox(0,0)[cc]{p}}
\put(8,58){\makebox(0,0)[cc]{0}}

\put(45,58){\makebox(0,0)[cc]{$\Delta_{2}=1$}}
\put(30,58){\makebox(0,0)[cc]{q}}

\put(57,90){\makebox(0,0)[cc]{$\Delta_{2}=1$}}
\put(57,75){\makebox(0,0)[cc]{q}}
\put(57,58){\makebox(0,0)[cc]{0}}

\put(135,58){\makebox(0,0)[cc]{$\Delta_{1}=1$}}
\put(120,57){\makebox(0,0)[cc]{p}}

\linethickness{0.3mm}
\put(0,85){\line(1,0){5}}
\put(5,85){\vector(1,0){0.12}}
\linethickness{0.3mm}
\put(10,85){\line(1,0){5}}
\put(10,85){\vector(-1,0){0.12}}
\linethickness{0.3mm}
\put(40,60){\line(0,1){5}}
\put(40,60){\vector(0,-1){0.12}}
\linethickness{0.3mm}
\put(40,50){\line(0,1){5}}
\put(40,55){\vector(0,1){0.12}}
\linethickness{0.3mm}
\put(50,85){\line(1,0){5}}
\put(55,85){\vector(1,0){0.12}}
\linethickness{0.3mm}
\put(60,85){\line(1,0){5}}
\put(60,85){\vector(-1,0){0.12}}
\linethickness{0.3mm}
\put(135,60){\line(0,1){5}}
\put(135,60){\vector(0,-1){0.12}}
\linethickness{0.3mm}
\put(135,50){\line(0,1){5}}
\put(135,55){\vector(0,1){0.12}}
\end{picture}
\caption{Row 34 of Table \ref{tab:b=-4}.}
\label{fig:rrr}
\end{figure}

\begin{figure}[h] 
\ifx\JPicScale\undefined\def\JPicScale{.9}\fi
\unitlength \JPicScale mm
\begin{picture}(135,40)(0,55)
\linethickness{0.3mm}
\put(5,50){\line(0,1){35}}
\linethickness{0.3mm}
\put(0,55){\line(1,0){40}}
\linethickness{0.3mm}
\put(0,60){\line(1,0){40}}
\linethickness{0.3mm}
\put(15,50){\line(0,1){35}}
\linethickness{0.3mm}
\put(5,65){\line(1,0){10}}
\linethickness{0.3mm}
\put(10,55){\line(0,1){10}}
\linethickness{0.3mm}
\put(15,65){\line(1,0){5}}
\linethickness{0.3mm}
\put(20,55){\line(0,1){10}}
\linethickness{0.3mm}
\put(50,50){\line(0,1){35}}
\linethickness{0.3mm}
\put(45,55){\line(1,0){40}}
\linethickness{0.3mm}
\put(55,60){\line(1,0){30}}
\linethickness{0.3mm}
\put(55,60){\line(0,1){25}}
\linethickness{0.3mm}
\put(95,50){\line(0,1){35}}
\linethickness{0.3mm}
\put(90,55){\line(1,0){40}}
\linethickness{0.3mm}
\put(90,65){\line(1,0){40}}
\linethickness{0.3mm}
\put(100,50){\line(0,1){35}}


\put(18,63){\makebox(0,0)[cc]{r}}
\put(12,58){\makebox(0,0)[cc]{0}}
\put(7,58){\makebox(0,0)[cc]{0}}
\put(7,63){\makebox(0,0)[cc]{0}}
\put(12,63){\makebox(0,0)[cc]{0}}
\put(17,58){\makebox(0,0)[cc]{0}}


\put(10,90){\makebox(0,0)[cc]{$\Delta_{1}=2$}}
\put(10,70){\makebox(0,0)[cc]{p}}

\put(55,90){\makebox(0,0)[cc]{$\Delta_{2}=1$}}
\put(53,70){\makebox(0,0)[cc]{q}}

\put(100,90){\makebox(0,0)[cc]{$\Delta_{3}=1$}}
\put(98,70){\makebox(0,0)[cc]{q}}

\put(35,57){\makebox(0,0)[cc]{$\Delta_{2}=1$}}
\put(25,57){\makebox(0,0)[cc]{q}}

\put(80,57){\makebox(0,0)[cc]{$\Delta_{3}=1$}}
\put(65,57){\makebox(0,0)[cc]{q}}

\put(125,60){\makebox(0,0)[cc]{$\Delta_{3}=2$}}
\put(110,60){\makebox(0,0)[cc]{p}}
\put(97,60){\makebox(0,0)[cc]{0}}

\linethickness{0.3mm}
\put(0,85){\line(1,0){5}}
\put(5,85){\vector(1,0){0.12}}
\linethickness{0.3mm}
\put(15,85){\line(1,0){5}}
\put(15,85){\vector(-1,0){0.12}}
\linethickness{0.3mm}
\put(40,60){\line(0,1){5}}
\put(40,60){\vector(0,-1){0.12}}
\linethickness{0.3mm}
\put(40,50){\line(0,1){5}}
\put(40,55){\vector(0,1){0.12}}
\linethickness{0.3mm}
\put(45,85){\line(1,0){5}}
\put(50,85){\vector(1,0){0.12}}
\linethickness{0.3mm}
\put(55,85){\line(1,0){5}}
\put(55,85){\vector(-1,0){0.12}}
\linethickness{0.3mm}
\put(85,60){\line(0,1){5}}
\put(85,60){\vector(0,-1){0.12}}
\linethickness{0.3mm}
\put(85,50){\line(0,1){5}}
\put(85,55){\vector(0,1){0.12}}
\linethickness{0.3mm}
\put(90,85){\line(1,0){5}}
\put(95,85){\vector(1,0){0.12}}
\linethickness{0.3mm}
\put(100,85){\line(1,0){5}}
\put(100,85){\vector(-1,0){0.12}}
\linethickness{0.3mm}
\put(130,65){\line(0,1){5}}
\put(130,65){\vector(0,-1){0.12}}
\linethickness{0.3mm}
\put(130,50){\line(0,1){5}}
\put(130,55){\vector(0,1){0.12}}
\end{picture}
\caption{Row 68 of Table \ref{tab:b=-4}.}
\label{fig:r}
\end{figure}
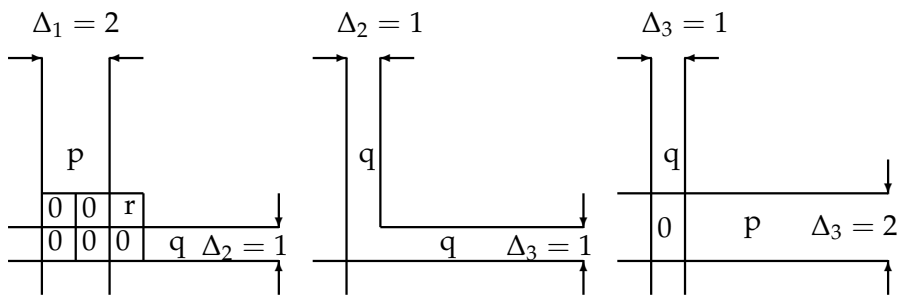


\begin{table} [h]
\begin{tabular}{|c||c|c|c|c||c|c|c|}
\hline
&$A$&$(\Delta_1,\Delta_2,\Delta_3)$&$(\pi_1^1,\dots,\pi_3^2)$&$E$&$\cc^{ss}(\Xi)$& $\cc^{st}(\Xi)$ & $\#$\\
\hline
1& $-1$ & $(1,1,0)$ &$(\;,\;,\s,\s,\s,\s)$  & $\emptyset$  & $4$ & $0$ & 3\\
\hline
2& $-1$ & $(1,1,0)$ &$(\s,\s,\s,\s,\;,\;)$  & $\emptyset$  & $1$ & $0$& 3\\
\hline
3& $-1$ & $(1,1,0)$ &$(\us,\;,\s,\s,\;,\;)$  & $\emptyset$  & $1$ & $0$& 3\\
\hline
4& $-1$ & $(1,1,0)$ &$(\rs,\;,\s,\s,\;,\;)$  & $\emptyset$  & $1$ & $0$& 3\\
\hline
5& $-1$ & $(1,1,0)$ &$(\;,\;,\s,\s,\rs,\;)$  & $\emptyset$  & $1$ & $0$& 3\\
\hline
6& $-1$ & $(1,1,0)$ &$(\;,\s,\s,\rs,\;,\;)$  & $\emptyset$  & $1$ & $0$& 3\\
\hline
7& $-1$ & $(1,1,0)$ &$(\;,\;,\s,\rs,\;,\s)$  & $\emptyset$  & $1$ & $0$& 3\\
\hline
8& $-1$ & $(1,1,0)$ &$(\;,\;,\us,\us,\;,\;)$  & $\emptyset$  & $1$ & $0$& 3\\
\hline
9& $-1$ & $(1,1,0)$ &$(\;,\;,\us,\rs,\;,\;)$  & $\emptyset$  & $1$ & $0$& 3\\
\hline
10& $-1$ & $(1,1,0)$ &$(\s,\;,\us,\s,\;,\;)$  & $\emptyset$  & $1$ & $0$& 3\\
\hline
11& $-1$ & $(1,1,0)$ &$(\;,\;,\us,\s,\s,\;)$  & $\emptyset$  & $1$ & $0$& 3\\
\hline
12& $-1$ & $(1,1,0)$ &$(\s,\;,\s,\s,\s,\;)$  & $\emptyset$  & $1$ & $0$& 3\\
\hline
13& $-1$ & $(1,1,0)$ &$(\;,\s,\s,\s,\s,\;)$  & $\emptyset$  & $1$ & $0$& 3\\
\hline
14& $-1$ & $(1,1,0)$ &$(\s,\s,\;,\;,\s,\s)$  & $\emptyset$  & $1$ & $0$& 3\\
\hline
15& $-1$ & $(1,1,0)$ &$(\;,\rs,\;,\;,\s,\s)$  & $\emptyset$  & $1$ & $0$& 3\\
\hline
16& $-1$ & $(1,1,0)$ &$(\;,\us,\;,\;,\s,\s)$  & $\emptyset$  & $1$ & $0$& 3\\
\hline
17& $-1$ & $(1,1,0)$ &$(\;,\;,\us,\;,\s,\s)$  & $\emptyset$  & $1$ & $0$& 3\\
\hline
18& $-1$ & $(1,1,0)$ &$(\s,\;,\;,\;,\s,\us)$  & $\emptyset$  & $1$ & $0$& 3\\
\hline
19& $-1$ & $(1,1,0)$ &$(\;,\;,\;,\s,\s,\us)$  & $\emptyset$  & $1$ & $0$& 3\\
\hline
20& $-1$ & $(1,1,0)$ &$(\;,\;,\;,\;,\rs,\rs)$  & $\emptyset$  & $1$ & $0$& 3\\
\hline
21& $-1$ & $(1,1,0)$ &$(\;,\;,\;,\;,\rs,\us)$  & $\emptyset$  & $1$ & $0$& 3\\
\hline
22& $-1$ & $(1,1,0)$ &$(\;,\s,\;,\;,\rs,\s)$  & $\emptyset$  & $1$ & $0$& 3\\
\hline
23& $-1$ & $(1,1,0)$ &$(\;,\;,\s,\;,\rs,\s)$  & $\emptyset$  & $1$ & $0$& 3\\
\hline
24& $-1$ & $(1,1,0)$ &$(\;,\s,\s,\;,\s,\s)$  & $\emptyset$  & $1$ & $0$& 3\\
\hline
25& $-1$ & $(1,1,0)$ &$(\s,\;,\s,\;,\s,\s)$  & $\emptyset$  & $1$ & $0$& 3\\
\hline
26& $-1$ & $(1,1,0)$ &$(\rus,\s,\;,\;,\;,\;)$  & $\emptyset$  & $1$ & $0$& 3\\
\hline
27& $-1$ & $(1,1,0)$ &$(\rs,\s,\;,\;,\s,\;)$  & $\emptyset$  & $1$ & $0$& 3\\
\hline
28& $-1$ & $(1,1,0)$ &$(\rs,\s,\;,\;,\;,\s)$  & $\emptyset$  & $1$ & $0$& 3\\
\hline
29& $-1$ & $(1,1,0)$ &$(\rs,\rs,\;,\;,\;,\;)$  & $\emptyset$  & $1$ & $0$& 3\\
\hline
30& $-1$ & $(1,1,0)$ &$(\rs,\s,\s,\;,\;,\;)$  & $\emptyset$  & $1$ & $0$& 3\\
\hline
31& $-1$ & $(1,1,0)$ &$(\rs,\s,\;,\s,\;,\;)$  & $\emptyset$  & $1$ & $0$& 3\\
\hline
32& $-1$ & $(1,1,0)$ &$(\s,\;,\s,\rs,\;,\;)$  & $\emptyset$  & $0$ & $1$& 3\\
\hline
33& $-1$ & $(1,1,0)$ &$(\;,\;,\s,\rs,\s,\;)$  & $\emptyset$  & $0$ & $1$& 3\\
\hline
34& $-1$ & $(1,1,0)$ &$(\;,\;,\s,\rss,\;,\;)$  & $\emptyset$  & $0$ & $1$& 3\\
\hline

\end{tabular}
\end{table}

\begin{table}[h]
\begin{tabular}{|c||c|c|c|c||c|c|c|}

& $A$&$(\Delta_1,\Delta_2,\Delta_3)$&$(\pi_1^1,\dots,\pi_3^2)$&$E$&$\cc^{ss}(\Xi)$& $\cc^{st}(\Xi)$ & $\#$\\
\hline
35& $-1$ & $(1,1,0)$ &$(\;,\s,\;,\;,\s,\us)$  & $\emptyset$  & $0$ & $1$& 3\\
\hline
36& $-1$ & $(1,1,0)$ &$(\;,\;,\s,\;,\s,\us)$  & $\emptyset$  & $0$ & $1$& 3\\
\hline
37& $-1$ & $(1,1,0)$ &$(\;,\;,\;,\;,\s,\uss)$  & $\emptyset$  & $0$ & $1$& 3\\
\hline
38& $-2$ & $(2,1,1)$ &$(\rs,\s,\;,\;,\;,\;)$  & $\emptyset$  & $1$ & $0$ & 3\\
\hline
39& $-2$ & $(2,1,1)$ &$(\rs,\;,\s,\;,\;,\;)$  & $\emptyset$  & $1$ & $0$ & 3\\
\hline
40& $-2$ & $(2,1,1)$ &$(\rs,\;,\;,\s,\;,\;)$  & $\emptyset$  & $1$ & $0$ & 3\\
\hline
41 & $-2$ & $(2,1,1)$ &$(\rs,\;,\;,\;,\s,\;)$  & $\emptyset$  & $1$ & $0$ & 3\\
\hline
42& $-2$ & $(2,1,1)$ &$(\us,\s,\;,\;,\;,\;)$  & $\emptyset$  & $1$ & $0$ & 3\\
\hline
43& $-2$ & $(2,1,1)$ &$(\us,\;,\s,\;,\;,\;)$  & $\emptyset$  & $1$ & $0$ & 3\\
\hline
44& $-2$ & $(2,1,1)$ &$(\us,\;,\;,\s,\;,\;)$  & $\emptyset$  & $1$ & $0$ & 3\\
\hline
45& $-2$ & $(2,1,1)$ &$(\us,\;,\;,\;,\s,\;)$  & $\emptyset$  & $1$ & $0$ & 3\\
\hline
46& $-2$ & $(2,1,1)$ &$(\;,\s,\;,\;,\;,\rs)$  & $\emptyset$  & $1$ & $0$ & 3 \\
\hline
47& $-2$ & $(2,1,1)$ &$(\;,\;,\s,\;,\;,\rs)$  & $\emptyset$  & $1$ & $0$ & 3\\
\hline
48& $-2$ & $(2,1,1)$ &$(\;,\;,\;,\s,\;,\rs)$  & $\emptyset$  & $1$ & $0$ & 3\\
\hline
49& $-2$ & $(2,1,1)$ &$(\;,\;,\;,\;,\s,\rs)$  & $\emptyset$  & $1$ & $0$ & 3\\
\hline
50& $-2$ & $(2,1,1)$ &$(\;,\s,\;,\;,\;,\us)$  & $\emptyset$  & $1$ & $0$ & 3\\
\hline
51& $-2$ & $(2,1,1)$ &$(\;,\;,\s,\;,\;,\us)$  & $\emptyset$  & $1$ & $0$ & 3\\
\hline
52& $-2$ & $(2,1,1)$ &$(\;,\;,\;,\s,\;,\us)$  & $\emptyset$  & $1$ & $0$ & 3\\
\hline
53& $-2$ & $(2,1,1)$ &$(\;,\;,\;,\;,\s,\us)$  & $\emptyset$  & $1$ & $0$ & 3\\
\hline
54& $-2$ & $(2,1,1)$ &$(\s,\s,\;,\;,\;,\s)$  & $\emptyset$  & $1$ & $0$ & 3\\
\hline
55& $-2$ & $(2,1,1)$ &$(\s,\;,\s,\;,\;,\s)$  & $\emptyset$  & $1$ & $0$ & 3\\
\hline
56& $-2$ & $(2,1,1)$ &$(\s,\;,\;,\s,\;,\s)$  & $\emptyset$  & $1$ & $0$ & 3\\
\hline
57& $-2$ & $(2,1,1)$ &$(\s,\;,\;,\;,\s,\s)$  & $\emptyset$  & $1$ & $0$ & 3\\
\hline
58& $-2$ & $(2,1,1)$ &$(\rs,\;,\;,\;,\;,\s)$  & $\emptyset$  & $0$ & $1$ & 3\\
\hline
59& $-2$ & $(2,1,1)$ &$(\us,\;,\;,\;,\;,\s)$  & $\emptyset$  & $0$ & $1$ & 3\\
\hline 
60& $-2$ & $(2,1,1)$ &$(\rus,\;,\;,\;,\;,\;)$  & $\emptyset$  & $0$ & $1$ & 3\\
\hline
61& $-2$ & $(2,1,1)$ &$(\uss,\;,\;,\;,\;,\;)$  & $\emptyset$  & $0$ & $1$ & 3\\
\hline
62& $-2$ & $(2,1,1)$ &$(\rss,\;,\;,\;,\;,\;)$  & $\emptyset$  & $0$ & $1$ & 3\\
\hline
63& $-2$ & $(2,1,1)$ &$(\s,\;,\;,\;,\;,\rs)$  & $\emptyset$  & $0$ & $1$ & 3\\
\hline
64& $-2$ & $(2,1,1)$ &$(\s,\;,\;,\;,\;,\us)$  & $\emptyset$  & $0$ & $1$ & 3\\
\end{tabular}
\end{table}

\begin{table}[h]
\begin{tabular}{|c||c|c|c|c||c|c|c|}

&$A$&$(\Delta_1,\Delta_2,\Delta_3)$&$(\pi_1^1,\dots,\pi_3^2)$&$E$&$\cc^{ss}(\Xi)$& $\cc^{st}(\Xi)$ & $\#$\\
\hline
65& $-2$ & $(2,1,1)$ &$(\;,\;,\;,\;,\;,\rus)$  & $\emptyset$  & $0$ & $1$ & 3\\
\hline
66& $-2$ & $(2,1,1)$ &$(\;,\;,\;,\;,\;,\uss)$  & $\emptyset$  & $0$ & $1$ & 3\\
\hline
67& $-2$ & $(2,1,1)$ &$(\;,\;,\;,\;,\;,\rss)$  & $\emptyset$  & $0$ & $1$ & 3\\
\hline
68& $-2$ & $(2,1,1)$ &$(\rs,\us,\;,\;,\;,\;)$  & $\{(2,3)\}$  & $1$ & $0$& 3\\
\hline
69& $-2$ & $(2,1,1)$ &$(\;,\;,\;,\;,\rs,\us)$  & $\{(2,3)\}$  & $1$ & $0$& 3\\
\hline
70& $-2$ & $(2,2,0)$ &$(\;,\;,\rs,\rs,\;,\;)$  & $\emptyset$  & $1$ & $0$& 3\\
\hline
71& $-2$ & $(2,2,0)$ &$(\s,\;,\rs,\s,\;,\;)$  & $\emptyset$  & $1$ & $0$& 3\\
\hline
72& $-2$ & $(2,2,0)$ &$(\;,\;,\rs,\s,\s,\;)$  & $\emptyset$  & $1$ & $0$& 3\\
\hline
73& $-2$ & $(2,2,0)$ &$(\;,\;,\;,\;,\us,\us)$  & $\emptyset$  & $1$ & $0$& 3\\
\hline
74& $-2$ & $(2,2,0)$ &$(\;,\s,\;,\;,\us,\s)$  & $\emptyset$  & $1$ & $0$& 3\\
\hline
75& $-2$ & $(2,2,0)$ &$(\;,\;,\s,\;,\us,\s)$  & $\emptyset$  & $1$ & $0$& 3\\
\hline
76& $-3$ & $(3,2,1)$ &$(\us,\;,\;,\;,\;,\;)$  & $\emptyset$  & $1$ & $0$ & 6\\
\hline
77& $-3$ & $(3,2,1)$ &$(\;,\;,\;,\;,\;,\us)$  & $\emptyset$  & $1$ & $0$ & 6\\
\hline
78& $-3$ & $(3,2,1)$ &$(\rs,\;,\;,\;,\;,\;)$  & $\emptyset$  & $1$ & $0$ & 6\\
\hline
79& $-3$ & $(3,2,1)$ &$(\;,\;,\;,\;,\;,\rs)$  & $\emptyset$  & $1$ & $0$ & 6\\
\hline
80& $-3$ & $(3,2,1)$ &$(\s,\;,\;,\;,\;,\s)$  & $\emptyset$  & $1$ & $0$ & 6\\
\hline
81& $-3$ & $(2,2,2)$ &$(\s,\;,\;,\;,\;,\;)$  & $\emptyset$  & $0$ & $1$ & 1\\
\hline
82& $-3$ & $(2,2,2)$ &$(\;,\s,\;,\;,\;,\;)$  & $\emptyset$  & $0$ & $1$ & 1\\
\hline
83& $-3$ & $(2,2,2)$ &$(\;,\;,\s,\;,\;,\;)$  & $\emptyset$  & $0$ & $1$ & 1\\
\hline
84& $-3$ & $(2,2,2)$ &$(\;,\;,\;,\s,\;,\;)$  & $\emptyset$  & $0$ & $1$ & 1\\
\hline
85& $-3$ & $(2,2,2)$ &$(\;,\;,\;,\;,\s,\;)$  & $\emptyset$  & $0$ & $1$ & 1\\
\hline
86& $-3$ & $(2,2,2)$ &$(\;,\;,\;,\;,\;,\s)$  & $\emptyset$  & $0$ & $1$ & 1\\
\hline

\end{tabular}
\vspace{5mm}
\caption{$\Delta$-family data for $b=-4$.}
\label{tab:b=-4}
\end{table}



\clearpage

\noindent {\tt{amingh@math.umd.edu}} \hspace{4cm}  {\tt{sheshmani.1@math.osu.edu}}



\begin{thebibliography}{12}

  
\bibitem[Beh09]{a1}
{K.~Behrend}, \emph{{{Donaldson}--{Thomas} invariants via microlocal geometry}},
  {Annals of Math.} \textbf{170, pp. 1307--1338} (2009).

\bibitem[BF97]{a2}
{K.~Behrend and B.~Fantechi}, \emph{{The intrinsic normal cone}}, {Invent.
  Math.} \textbf{128, pp. 45--88} (1997).

\bibitem[ES87]{a126}
{G.~Ellingsrud and S.~A. Str{\o}mme}, \emph{{On the homology of the Hilbert scheme
  of points in the plane}}, {Invent. Math.} \textbf{87 (2), pp.
  343--352} (1987).

\bibitem[GSY07]{a78}
D.~Gaiotto, A.~Strominger, and X.~Yin, \emph{{The M5-Brane Elliptic Genus:
  Modularity and BPS States}}, {J. of high energy phys.} \textbf{8 (070), pp. 1--17} (2007).

\bibitem[GY07]{a95}
{D.~Gaiotto and X.~Yin}, \emph{{Examples of M5-Brane Elliptic Genera}},  {J. of high energy phys.} \textbf{11 (004), pp. 1--12} (2007).


\bibitem[LT98]{a14},
{J.~ Li and G.~ Tian}, \emph{{Virtual moduli cycles and {G}romov-{W}itten invariants of algebraic varieties}}, {J.~ Amer.~ Math.~ Soc.},\textbf{11, pp. 119--174}(1998).

  
\bibitem[GS13]{a123}  
{A.~Gholampour and A.~Sheshmani}, \emph{{Donaldson-Thomas Invariants of 2-dimensional sheaves inside threefolds and modular forms}}, \textbf{arXiv:1309.0050} (2013).  

\bibitem[GST13]{a144}
{A.~Gholampour and A.~Sheshmani and R.~P.~Thomas}, \emph{{Counting curves on surfaces in Calabi-Yau threefolds}}, \textbf{arXiv:1309.0051} (2013).
  
 \bibitem[Got90]{a89}
{L. G\"{o}ttsche}, \emph{{The Betti numbers of the Hilbert scheme of points on a
  smooth projective surface}}, {Math. Ann.} \textbf{286, pp. 193--207} (1990).
 
 \bibitem[OSV04]{a98}
{H.~ Ooguri and A.~ Strominger and C. ~Vafa}, \emph{{Black Hole Attractors and the Topological String}},
{Physics Review D},\textbf{D70:106007}(2004).

\bibitem[HL97]{a9}
\bysame, \emph{The geometry of moduli spaces of sheaves}, {Cambridge University
  press} (1997).

\bibitem[HT10]{a10}
{D.~Huybrechts and R.~P. Thomas}, \emph{Deformation--Obstruction theory for
  complexes via {A}tiyah and {Kodaira}--{Spencer} classes}, {Math. Ann.}
  \textbf{346 (3), pp. 545--569} (2010).


\bibitem[J08]{a58}
{D.~ Joyce}, \emph{{Configurations in abelian categories. {IV}. {Invariants} and changing stability conditions}},{Advances in Mathematics}, \textbf {217 (1), pp. 125--204} (2008).


\bibitem[JS11]{a30}
{D.~Joyce and Y.~Song}, \emph{A theory of generalized Donaldson--Thomas
  invariants}, Memoirs of the AMS \textbf{217 (1020)} (2011).



\bibitem[Kly91]{a119}
A.~A. Klyachko, \emph{{Vector bundles and torsion free sheaves on the
  projective plane}}, {MPI f\"{u}r Mathematik} (1991).

\bibitem[KS08]{a42}
{Maxim Kontsevich and Yan Soibelman},\emph{{Stability structures, motivic {Donaldson}-{Thomas} invariants and cluster transformations}}, \textbf{arXiv:0811.2435} (2008).


\bibitem[Koo08]{a34}
M.~Kool, \emph{Fixed point loci of moduli spaces of sheaves on toric
  varieties}, Advances in Math. \textbf{227, Issue 4, pp. 1700--1755} (2008).

\bibitem[Koo09]{a110}
\bysame, \emph{{Euler Characteristics of Moduli Spaces of Torsion Free Sheaves
  on Toric Surfaces}}, \textbf{arXiv:0906.3393v2} (2009).


\bibitem[Pot93]{a12}
{J.~Le Potier}, \emph{Systemes coherents et structures de niveau}, {Asterisque},
  \textbf{214 (143)} (1993).



\bibitem[Man10]{a135}
{J.~Manschot}, \emph{{The Betti numbers of the moduli space of stable sheaves of
  rank 3 on $\mathbb{P}^2$}}, \textbf{arXiv:1009.1775} (2010).

\bibitem[MNOP06]{a15}
{D.~Maulik, N.~Nekrasov, A.~Okounkov, and R.~Pandharipande},
  \emph{{Gromov}--{Witten} theory and {Donaldson}--{Thomas} theory. {I}}, {Compos.
  Math.} \textbf{11, pp. 1263--1285} (2006).


\bibitem[OSV01]{a100}
{H.~Ooguri, A.~Strominger, and C.~Vafa}, \emph{{Black Hole Attractors and the
  Topological String}}, {Physics Review D.} \textbf{70 (10), pp. 106--119 } (2001).

\bibitem[Per04]{a115}
{M.~Perling}, \emph{{Moduli for Equivariant Vector Bundles of Rank Two on Smooth
  Toric Surfaces}}, {Math. Nachr.} \textbf{265, pp. 87--99} (2004).


 
\bibitem[Tho00]{a20}
{R.~P.~Thomas}, \emph{A holomorphic {Casson} invariant for {Calabi}-{Yau}
  3-folds, and bundles on {K3} fibrations}, {J. Differential Geom.} \textbf{54, pp. 367--438}
  (2000).

\bibitem[VW94]{a136}
{C.~Vafa and E.~Witten}, \emph{{A Strong Coupling Test of S-Duality}}, {J. Nucl.
  Phys.} \textbf{B431, pp. 3--77} (1994).

\end{thebibliography}
\end{document}